\title{Spectral radius of finite and infinite planar graphs 
       and of graphs of bounded genus}
\author{Zden\v{e}k Dvo\v{r}\'ak\thanks{Supported in part through a postoctoral
   position at Simon Fraser University.}~\thanks{On leave from: 
   Institute of Theoretical Informatics,
   Charles University, Prague, Czech Republic.}\\
  {Department of Mathematics}\\
  {Simon Fraser University}\\
  {Burnaby, B.C. V5A 1S6} \\
  email: {\tt rakdver@kam.mff.cuni.cz}
\and
  Bojan Mohar\thanks{Supported in part by an NSERC Discovery Grant (Canada),
  by the Canada Research Chair program, and by the
  Research Grant P1--0297 of ARRS (Slovenia).}~\thanks{On leave from:
  IMFM \& FMF, Department of Mathematics, University of Ljubljana, Ljubljana,
  Slovenia.}\\
  {Department of Mathematics}\\
  {Simon Fraser University}\\
  {Burnaby, B.C. V5A 1S6} \\
  email: {\tt mohar@sfu.ca}
}
\newtheorem{theorem}{Theorem}[section]
\newtheorem{lemma}[theorem]{Lemma}
\newtheorem{corollary}[theorem]{Corollary}
\newcommand{\DEF}[1]{{\em #1\/}}
\newcommand{\RR}{\ensuremath{\mathbb R}}
\newcommand{\gam}{g} 
\begin{document}

\maketitle

\begin{abstract}
It is well known that the spectral radius of a tree whose maximum degree
is $D$ cannot exceed $2\sqrt{D-1}$. In this paper we derive similar bounds 
for arbitrary planar graphs and for graphs of bounded genus. 
It is proved that a the spectral radius $\rho(G)$ of a planar graph $G$ of 
maximum vertex degree $D\ge 4$ satisfies $\sqrt{D}\le \rho(G)\le \sqrt{8D-16}+7.75$.
This result is best possible up to the additive constant---we construct an
(infinite) planar graph of maximum degree $D$, whose spectral radius is $\sqrt{8D-16}$.
This generalizes and improves several previous results and solves an open problem proposed by Tom Hayes.
Similar bounds are derived for graphs of bounded genus. 
For every $k$, these bounds can be improved by excluding $K_{2,k}$ as a subgraph.
In particular, the upper bound is strengthened for 5-connected graphs.
All our results hold for finite as well as for infinite graphs.

At the end we enhance the graph decomposition method introduced in the 
first part of the paper 
and apply it to tessellations of the hyperbolic plane. 
We derive bounds on the spectral radius that are close to the true value,
and even in the simplest case of regular tessellations of type $\{p,q\}$
we derive an essential improvement over known results, obtaining
exact estimates in the first order term and non-trivial estimates for
the second order asymptotics.
\end{abstract}

\section{Introduction}

Every tree of maximum degree $D$ is a subgraph of the infinite $D$-regular tree.
This observation immediately implies that the spectral radius of every such tree
is at most $2\sqrt{D-1}$. In this paper we derive similar bounds for arbitrary 
planar graphs and for graphs of bounded genus. This generalizes and improves 
several previous results and solves an open problem proposed by Hayes.
Usually higher connectivity of graphs allows more edges in the graph and 
thus gives rise to graphs with larger spectral radius. However, an interesting 
outcome of our proof is that higher connectivity has converse effect in the
case of planar graphs. The extremal examples for the largest 
spectral radius need many 4-separations, and hence 5-connected graphs, in particular,
allow better upper bounds on the spectral radius.

All graphs in this paper are \DEF{simple}, i.e.\ no loops or multiple edges are
allowed. They can be finite or infinite, but we request that they are locally
finite. In fact, we shall always have a (finite) upper bound on the maximum degree.

It is well known that the edges of every planar graph $G$ can be partitioned into three
acyclic subgraphs. By compactness, this extends to all (locally finite) planar graphs
and implies that $\rho(G)\le 6\sqrt{\Delta-1}$, where $\Delta$ is the maximum degree of $G$.
This bound has been improved by Hayes~\cite{Hayes}. He use the following theorem.

\begin{theorem}[Hayes~\cite{Hayes}]\label{thm-hayes}
Any graph $G$ that has an orientation with maximum indegree $k$ 
(hence also any $k$-degenerate
graph) and $\Delta=\Delta(G)\ge 2k$ satisfies $\rho(G)\le 2\sqrt{k(\Delta-k)}$.
\end{theorem}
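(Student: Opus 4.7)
The plan is to bound the Rayleigh quotient of the adjacency matrix $A$ of $G$. Since $A$ is symmetric and nonnegative, for finite $G$ the spectral radius equals $\sup_x x^{T}Ax/\|x\|^{2}$; for infinite locally finite $G$ the same identity holds viewing $A$ as a bounded self-adjoint operator on $\ell^{2}(V)$, so I would treat both cases uniformly and simply estimate the quadratic form on arbitrary vectors $x$.

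First I would fix an orientation of $G$ with maximum in-degree $k$ (in the $k$-degenerate case this is obtained by the standard iterative peeling of low-degree vertices and orienting the removed edges towards the just-removed vertex). Writing $u\to v$ for an oriented edge,
\[
x^{T}Ax \;=\; 2\sum_{u\to v} x_u x_v.
\]
Applying the AM-GM inequality $2ab \le c\,a^{2}+c^{-1}b^{2}$ to each oriented edge with a common constant $c>0$ to be optimized, and regrouping the resulting terms by vertex, gives
\[
x^{T}Ax \;\le\; \sum_{w} x_w^{2}\bigl(c\, d^{+}(w)+c^{-1}d^{-}(w)\bigr)
\;=\; \sum_{w} x_w^{2}\bigl(c\, d(w)+(c^{-1}-c)\, d^{-}(w)\bigr),
\]
using $d^{+}(w)=d(w)-d^{-}(w)$. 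Provided $c\le 1$, the coefficient $c^{-1}-c$ is nonnegative, so the bounds $d(w)\le\Delta$ and $d^{-}(w)\le k$ yield $x^{T}Ax \le \bigl(c(\Delta-k)+c^{-1}k\bigr)\|x\|^{2}$.

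Finally I would optimize over $c$. The choice $c=\sqrt{k/(\Delta-k)}$ minimizes $c(\Delta-k)+c^{-1}k$ and gives the value $2\sqrt{k(\Delta-k)}$; the hypothesis $\Delta\ge 2k$ is precisely what guarantees $c\le 1$, so the previous step (which required the correct sign of $c^{-1}-c$ in order to plug in $d^{-}(w)\le k$) is legitimate. The proof is essentially a one-line calculation after the orientation is in hand; I expect the only real subtlety to lie in tracking the sign of $c^{-1}-c$ so that each degree bound is applied in the correct direction, and, for infinite $G$, in invoking the $\ell^{2}$-operator-norm characterization of $\rho(G)$. As a sanity check, at $\Delta=2k$ the bound collapses to $\rho(G)\le \Delta$, which is the trivial maximum-degree bound, confirming that the hypothesis $\Delta\ge 2k$ is the natural regime.
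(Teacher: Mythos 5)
The paper does not prove Theorem~\ref{thm-hayes} at all: it is quoted from Hayes~\cite{Hayes} as a black box, so there is no in-paper argument to compare against. Your proof is correct as it stands. For an edge oriented $u\to v$ the estimate $2x_ux_v\le c\,x_u^2+c^{-1}x_v^2$ puts the weight $c$ on the tail and $c^{-1}$ on the head, so regrouping by vertex gives the coefficient $c\,d^{+}(w)+c^{-1}d^{-}(w)=c\,d(w)+(c^{-1}-c)\,d^{-}(w)$, and the condition $c\le 1$ is exactly what lets you use $d^{-}(w)\le k$ and $d(w)\le\Delta$ with the right signs; the optimal $c=\sqrt{k/(\Delta-k)}$ satisfies $c\le1$ precisely because $\Delta\ge2k$, yielding $c(\Delta-k)+c^{-1}k=2\sqrt{k(\Delta-k)}$. (The hypothesis is genuinely needed: $K_{k+1}$ has an orientation with indegree at most $k$ but $\rho=k>2\sqrt{k(\Delta-k)}=0$.) The infinite case is also fine, since bounded degree makes $A$ a bounded self-adjoint operator and $\rho(G)=\sup_{\|x\|=1}\langle x|Ax\rangle$ (or one can invoke Theorem~\ref{thm:basic} and pass to finite subgraphs, which inherit both the orientation bound and the degree bound). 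Your route is essentially the Rayleigh-quotient/Schur-test form of Hayes's original argument, which writes $A=M+M^{\top}$ with $M$ the oriented adjacency matrix and bounds $\|M\|\le\sqrt{\|M\|_1\|M\|_\infty}\le\sqrt{k(\Delta-k)}$; the two derivations are equivalent in substance (your parameter $c$ plays the role of the row/column rescaling), but yours has the minor advantages of being a single quadratic-form computation and of making transparent exactly where $\Delta\ge2k$ is used.
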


Since each planar graph $G$ has an orientation with maximum indegree $3$, this
gives $\rho(G)\le \sqrt{12(\Delta-3)}$. At the 1st CanaDAM conference (Banff, Alberta, 2007), Tom Hayes
asked to what extent the constant factor in his upper bound can be improved.
We answer Hayes' question by proving that $\rho(G)\le\sqrt{8\Delta}+O(1)$
(see Theorem \ref{thm-radiusplan}) and by showing that this bound is essentially best possible. Our bound cannot be improved even when $G$ is bipartite and ``tree-like''
(i.e. with lots of 2-separations).
To some surprise, if the connectivity is increased, the upper bound can be 
strengthened further. Actually, it suffices to exclude $K_{2,k}$ subgraph, where $k = o(\Delta)$.
These results also apply for all graphs of bounded genus, cf. Theorem~\ref{thm-radius}.

In the last section we enhance the graph decomposition method used in this paper 
and apply it to tessellations of the hyperbolic plane, whose graph is $p$-regular. 
We derive lower and upper bounds on the spectral radius that are close to 
each other and asymptotically coincide. 
Even in the simplest case of regular tessellations of type $\{p,q\}$,
previously known bounds were not of the right magnitude asymptotically. Our
estimates are exact in the first order term and also give a non-trivial
terms for the second order asymptotics.
See further discussion about known results in the next section.
It is worth pointing out that $p$-regular graphs of planar tessellations are 
$p$-connected (as proved in \cite{Mo06}). 
It turns out that with $q$ tending to infinity, the spectral radius 
tends to the same value as the spectral radius of the $p$-regular tree. 

We use standard terminology and notation. For a graph $G$ and $v\in V(G)$, $e\in E(G)$,
we denote by $G-v$ and $G-e$ the subgraph of $G$ obtained by deleting $v$ and
the subgraph obtained by removing $e$, respectively. If $e=uv$ is not an edge of
$G$, then we denote by $G+e$ the graph obtained from $G$ by adding the edge $e$.
We denote by $\Delta(G)$ and $\delta(G)$ the maximum and the minimum degree of $G$,
respectively. A graph is said to be \DEF{$d$-degenerate}
if every subgraph $H$ of $G$ has
$\delta(H)\le d$. This condition is equivalent to the requirement that $G$ can be
reduced to the empty graph by successively removing vertices whose degree is 
at most~$d$. If $H$ is a subgraph of $G$, we write $H\subseteq G$.

\section{Motivation and overview of known results}

Our motivation for the study of the spectral radius of planar graphs comes from various directions.

\medskip

\noindent
{\bf (1) Harmonic analysis.}
The spectral radius of infinite planar graphs, in particular for tesselations
of the hyperbolic plane, is of great interest in harmonic analysis.
We refer to \cite{MW} and to \cite{Wo1,Wo2} for an overview. 

Tessellations, whose graphs are regular of degree $d$, may have the spectral
radius as large as $d$. However, this happens precisely when the graph is 
\DEF{amenable} (cf., e.g., \cite{Mo91,Wo2}). This is also equivalent to 
the condition that the random walk on the graph is recurrent. This case is well
understood. However, in the case of the tessellations of the hyperbolic plane
(or more general \DEF{Cantor spheres}, see \cite{Mo06}) the random walk is 
transient (Dodziuk \cite{Do}), the isoperimetric number (or the Cheeger constant)
is positive \cite{Mo92}, and the spectral radius is strictly smaller than $d$. 
It can be as small as $2\sqrt{d-1}$ (in the case of the $d$-regular tree). 
Quantitative relationship between these notions is provided via Cheeger inequality
(see, e.g., \cite{BMS-T} or \cite{Wo2}). 
It is thus surprising that the exact values for the
spectral radius of regular tessellations of the hyperbolic plane are not known.
Earlier best results are by \v{Z}uk \cite{Zuk} and Higuchi and Shirai \cite{HS}.
They will be reviewed in the last section, where we present improved bounds.

\medskip

\noindent
{\bf (2) Mixing times of Markov chains.}
Bounds on the spectral radius of planar graphs can be used in the design and analysis of certain Monte Carlo algorithms and have applications not only in the theory
of algorithms but also in theoretical physics. In particular, Hayes~\cite{Hayes} 
and Hayes, Vera, and Vigoda \cite{HVV} used these to prove $O(n\log n)$ mixing time
for the Glauber dynamics for the spin systems on planar graphs. These applications include the Ising model, hard-core lattice gas model, and graph colorings that are important in theoretical physics.

\medskip

\noindent
{\bf (3) An application in geography.}
Boots and Royle \cite{BR} investigated the spectral radius of planar graphs
motivated by an application in geography networks. They conjectured that
for every planar graph, $\rho(G) \le O(\sqrt{n})$, where $n=|G|$, and their
computational experiments suggested that the complete join of $K_2$ and the
path $P_{n-2}$ gives the extremal case. Cao and Vince \cite{CV} made
similar conjecture and proved that $\rho(G) \le 4 + \sqrt{3(n-3)}$.
Yuan \cite{Yuan} and Ellingham and Zha \cite{EZ} found extensions to graphs of 
a fixed genus $g$. It is interesting that all these results are close to best
bounds when there is a vertex whose degree is close to $n$. 
The setting in this paper provides the same type of the results but the bounds
depend on the maximum degree and not the number of vertices.
\medskip

\noindent
{\bf (4) Structural graph theory.}
In the study of graph minors, three basic structures appear when one excludes a fixed
graph $H$ as a minor. The first one is topological---one gets graphs embeddable in
surfaces in which the excluded graph $H$ cannot be embedded.
The second structure are extensions of other structures by adding a bounded number
of new vertices or adding so-called ``vortices'' to the surface structure.
This is somewhat technical and we will not consider it at this point.
The last structure is related to ``tree-like decompositions'' and, in particular,
gives rise to the family of graphs of bounded tree-width. These graphs are
degenerate in the sense that they can be reduced to the empty graph by successively
removing vertices of small degree. One can prove similar bounds on the spectral radius 
as presented in this paper, but the detailed analysis requires additional work and we 
leave details for future work. We refer to \cite{KM} for references concerning
graph minors theory, and to \cite{CdV} for some important relations between
spectral theory and graph minors.

\section{Spectral radius of finite and infinite graphs}

If $V$ is a set, we define $\ell^2(V)$ as the set of all functions 
$f: V\to \RR$ such that $||f||^2=\sum_{v\in V} f(v)^2 < \infty$.
For a graph $G$ with vertex set $V$ and edge set $E$, we define 
the \DEF{adjacency operator} $A=A(G)$ as the linear operator that acts
on $\ell^2(V)$ in the same way as the adjacency matrix by the rule of the
matrix-vector multiplication:
$$
     (Af)(v) = \sum_{\{u,v\}\in E} f(u) \,.
$$
If the degrees of all vertices in $G$ are bounded above by a finite constant $D$,
then this defines a bounded self-adjoint linear operator, whose spectrum is
contained in the interval $[-D,D]$. The supremum of the spectrum is called
the \DEF{spectral radius} of $G$ and is denoted by $\rho(G)$.
We refer to \cite{MW} for more details about the spectrum of infinite graphs,
and refer to \cite{Bi,CDS,GoRo} for results about the spectra of finite graphs.

The following basic result \cite{Mo} enables us to restrict our attention to 
finite graphs if desired.

\begin{theorem}
\label{thm:basic}
If $G$ is an infinite (locally finite) graph, then its spectral radius $\rho(G)$
is the supremum of spectral radii $\rho(H)$ taken over all finite subgraphs
$H$ of $G$, and it is equal to $\sup\{\rho(H_i)\mid i=1,2,\dots\}$, where
$H_1\subseteq H_2\subseteq \cdots$ is any seqence of subgraphs of $G$ such that
$\bigcup_{i\ge1} H_i = G$.
\end{theorem}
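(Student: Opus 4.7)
The plan is to establish the equality $\rho(G) = \sup_{H\text{ finite}} \rho(H)$ via two matching inequalities, and then to deduce the statement for an arbitrary exhausting sequence by monotonicity. The key tool is the variational characterization
\[
  \rho(G) \;=\; \sup\Bigl\{\,\langle Af,f\rangle \;:\; f\in \ell^2(V),\ \|f\|=1\,\Bigr\},
\]
which follows from two standard functional-analytic facts: for a bounded self-adjoint operator the top of its spectrum coincides with the supremum of the Rayleigh quotient; and for an operator $A$ with nonnegative entries one has $\langle A|f|,|f|\rangle \ge |\langle Af,f\rangle|$ (by the triangle inequality applied termwise), so that the positive end of the spectrum realizes the operator norm.

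For $\sup_H \rho(H) \le \rho(G)$, I would take any finite $H\subseteq G$ and let $f\in\ell^2(V(H))$ be a nonnegative unit Perron eigenvector of $A(H)$ with eigenvalue $\rho(H)$. Extending $f$ by zero to $\tilde f\in \ell^2(V(G))$, every edge of $H$ contributes identically to $\langle A(G)\tilde f,\tilde f\rangle$ and to $\langle A(H)f,f\rangle$, while any additional edges of $G$ between vertices of $V(H)$ contribute nonnegatively, giving $\rho(H)\le\langle A(G)\tilde f,\tilde f\rangle\le \rho(G)$. For the reverse direction I fix $\varepsilon>0$ and use the variational formula to pick $g\in\ell^2(V(G))$ with $\|g\|=1$ and $\langle Ag,g\rangle\ge \rho(G)-\varepsilon$. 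Since finitely supported functions are dense in $\ell^2(V(G))$ and $A$ is bounded by the maximum degree $D$, a straightforward approximation produces a finitely supported unit $g'$ with $\langle Ag',g'\rangle\ge \rho(G)-2\varepsilon$. Taking $H$ to be the subgraph of $G$ induced on the (finite) support of $g'$, all edges contributing to $\langle A(G)g',g'\rangle$ are edges of $H$, so $\rho(H)\ge\langle A(H)g',g'\rangle=\langle A(G)g',g'\rangle\ge \rho(G)-2\varepsilon$; letting $\varepsilon\to 0$ gives $\sup_H \rho(H)\ge \rho(G)$.

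Finally, for an exhausting sequence $H_1\subseteq H_2\subseteq\cdots$ with $\bigcup_i H_i=G$, the monotonicity established above yields $\rho(H_1)\le\rho(H_2)\le\cdots\le\rho(G)$. Any finite subgraph $H\subseteq G$ has only finitely many vertices and edges and must therefore lie in $H_i$ for all sufficiently large $i$, so $\rho(H)\le \rho(H_i)\le \sup_j \rho(H_j)$; combining this with the finite-subgraph equality yields $\rho(G)=\sup_i \rho(H_i)$. The main technical obstacle is the careful handling of the variational characterization and the $\ell^2$ approximation in the infinite-dimensional setting: the excerpt defines $\rho(G)$ as $\sup\sigma(A)$ rather than as the operator norm $\|A\|$, and the equivalence of these two quantities for operators with nonnegative entries is the crucial identification without which the rest of the argument would not go through.
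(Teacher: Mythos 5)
The paper does not prove Theorem~\ref{thm:basic} at all: it is quoted as a known basic result from Mohar's 1982 paper \cite{Mo}, so there is no in-paper argument to compare against. Your proof is the standard one and is essentially correct: the Rayleigh-quotient characterization of the top of the spectrum of a bounded self-adjoint operator, extension of a nonnegative Perron vector by zero for the inequality $\rho(H)\le\rho(G)$, and truncation to a finitely supported approximate maximizer (using $\|A\|\le D$) for the reverse inequality, followed by the exhaustion argument. Two small points deserve attention. First, in the exhaustion step the subgraphs $H_i$ are not assumed finite, so the inequalities $\rho(H_i)\le\rho(G)$ and $\rho(H)\le\rho(H_{i_0})$ (for a finite $H$ sitting inside a possibly infinite $H_{i_0}$) are not literally covered by your Perron-eigenvector argument, which applies only to finite subgraphs; you should either rerun that argument with a nonnegative approximate maximizer in place of an eigenvector, or invoke the already-proved finite-subgraph equality applied to $H_i$ itself. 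Second, your closing remark overstates the role of the identification $\sup\sigma(A)=\|A\|$: the argument only uses $\sup\sigma(A)=\sup_{\|f\|=1}\langle Af,f\rangle$, which holds for every bounded self-adjoint operator; nonnegativity of the entries is needed only so that the extension-by-zero and edge-dropping steps move the quadratic form in the right direction. Neither point is a genuine gap.
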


The spectral radius is monotone and subadditive. Formally this is stated in the following lemma.

\begin{lemma}
\label{lem:1}
{\rm (a)} If $H\subseteq G$, then $\rho(H)\le \rho(G)$.

{\rm (b)} If $G = K\cup L$, then $\rho(G)\le \rho(K) + \rho(L)$.
\end{lemma}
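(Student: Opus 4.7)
The plan is to use the variational characterization of the spectral radius as the supremum of the Rayleigh quotient. Since $A(G)$ is a bounded self-adjoint operator on $\ell^2(V(G))$, the spectral theorem yields
\[
\rho(G) \;=\; \sup_{\substack{f\in \ell^2(V(G))\\ \|f\|=1}} \langle A(G)f,f\rangle.
\]
Because $A(G)$ has non-negative matrix entries, replacing $f$ by $|f|$ pointwise can only increase the Rayleigh quotient, so we may restrict the supremum to $f\ge 0$. (For infinite graphs this may alternatively be reduced to the finite case via Theorem~\ref{thm:basic}, where Perron--Frobenius applies directly.)

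For (a), pick an arbitrary non-negative $f\in\ell^2(V(H))$ with $\|f\|=1$ and extend it to $\tilde f\in\ell^2(V(G))$ by setting $\tilde f(v)=0$ for $v\in V(G)\setminus V(H)$. Since $E(H)\subseteq E(G)$ and every summand is non-negative,
\[
\langle A(G)\tilde f,\tilde f\rangle \;=\; 2\!\sum_{uv\in E(G)}\tilde f(u)\tilde f(v) \;\ge\; 2\!\sum_{uv\in E(H)} f(u)f(v) \;=\; \langle A(H)f,f\rangle.
\]
Taking the supremum over all such $f$ gives $\rho(H)\le \rho(G)$.

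For (b), take $f\ge 0$ in $\ell^2(V(G))$ with $\|f\|=1$, and write $f_K=f|_{V(K)}$, $f_L=f|_{V(L)}$. Because $E(G)\subseteq E(K)\cup E(L)$ and all terms are non-negative,
\[
\langle A(G)f,f\rangle \;\le\; 2\!\sum_{uv\in E(K)} f_K(u)f_K(v) \;+\; 2\!\sum_{uv\in E(L)} f_L(u)f_L(v) \;=\; \langle A(K)f_K,f_K\rangle + \langle A(L)f_L,f_L\rangle.
\]
By the variational upper bounds $\langle A(K)f_K,f_K\rangle\le \rho(K)\|f_K\|^2$ and $\langle A(L)f_L,f_L\rangle\le \rho(L)\|f_L\|^2$, together with $\|f_K\|^2,\|f_L\|^2\le\|f\|^2=1$, we obtain $\langle A(G)f,f\rangle\le \rho(K)+\rho(L)$, and taking the supremum over $f$ yields (b).

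There is no real obstacle; the only point requiring care is the passage from general test functions to non-negative ones and, for infinite graphs, the identification of the supremum of the spectrum of the non-negative self-adjoint operator $A(G)$ with the supremum of the Rayleigh quotient — both are standard facts about adjacency operators of locally finite graphs.
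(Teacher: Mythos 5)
The paper states this lemma without proof, treating monotonicity and subadditivity of the spectral radius as standard facts (it only points to references such as \cite{MW} for background on spectra of infinite graphs), so there is no in-paper argument to compare yours against. Your proof is correct and is the standard way to fill this gap: the identification $\rho(G)=\sup_{\|f\|=1}\langle A(G)f,f\rangle$ for a bounded self-adjoint operator, the reduction to non-negative test functions (genuinely needed in (a), since $H$ need not be an induced subgraph, and in (b), since $K$ and $L$ may share edges), the zero-extension argument for monotonicity, and the edge-splitting of the quadratic form for subadditivity. Two small points you leave implicit but which are harmless: in (b) you pass from $\rho(K)\|f_K\|^2$ to $\rho(K)$ using $\|f_K\|\le 1$, which also needs $\rho(K)\ge 0$ (true, since $\langle A(K)\delta_v,\delta_v\rangle=0$), and the identity $\langle A(G)f,f\rangle=2\sum_{uv\in E(G)}f(u)f(v)$ for infinite $G$ requires absolute convergence, which follows from the bounded-degree assumption via $\sum_{uv\in E}|f(u)f(v)|\le \tfrac{D}{2}\|f\|^2$. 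Your alternative suggestion of reducing to finite subgraphs through Theorem~\ref{thm:basic} and invoking Perron--Frobenius would work equally well and is closer in spirit to how the paper handles finite/infinite issues elsewhere.
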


Application of Lemma \ref{lem:1}(a) to the subgraph of $G$ consisting of
a vertex of degree $\Delta(G)$ together with all its incident edges gives 
a lower bound on the spectral radius in terms of the maximum degree.
Also, the spectral radius is bounded from above by the maximum degree,
so we have the following result:

\begin{lemma}
\label{lem:2}
$\sqrt{\Delta(G)}\le\rho(G)\le \Delta(G)$.
\end{lemma}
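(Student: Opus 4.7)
The plan is to prove the two inequalities separately, relying on Lemma~\ref{lem:1}(a) for the lower bound and on a direct $\ell^2$ estimate of the adjacency operator for the upper bound.

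For the lower bound, I follow the hint in the paragraph preceding the statement. Let $v\in V(G)$ be a vertex of degree $D=\Delta(G)$, and let $H$ be the star consisting of $v$ together with its $D$ incident edges (so $H\cong K_{1,D}$). A one-line eigenvalue computation, using the test vector that is $\sqrt{D}$ on $v$ and $1$ on each leaf, shows that $\sqrt{D}$ is an eigenvalue of $A(H)$, so $\rho(H)=\sqrt{D}$. Lemma~\ref{lem:1}(a) then gives $\rho(G)\ge \rho(H)=\sqrt{\Delta(G)}$.

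For the upper bound, write $\Delta=\Delta(G)$ and let $f\in\ell^2(V)$. Two applications of Cauchy--Schwarz and a swap of summation order yield
$$
\|Af\|^2 = \sum_v \Bigl(\sum_{u\sim v} f(u)\Bigr)^2 \le \sum_v \deg(v)\sum_{u\sim v} f(u)^2 \le \Delta \sum_u \deg(u)\,f(u)^2 \le \Delta^2\,\|f\|^2,
$$
where the middle equality uses $\sum_v \sum_{u\sim v} f(u)^2 = \sum_u \deg(u)\,f(u)^2$. Hence the operator norm satisfies $\|A\|\le\Delta$, and since $A$ is self-adjoint, $\rho(G) = \sup\sigma(A) \le \|A\| \le \Delta$, completing the proof.

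There is really no serious obstacle: the lower bound is a one-line application of monotonicity once the correct star subgraph is identified, and the upper bound is the textbook Cauchy--Schwarz bound on the norm of an adjacency operator. The only small care needed is to phrase the argument in terms of the operator $A(G)$ acting on $\ell^2(V)$, so that it applies uniformly to infinite locally finite graphs of bounded degree rather than only to finite matrices.
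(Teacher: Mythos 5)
Your proof is correct and follows essentially the same route as the paper: the lower bound via Lemma~\ref{lem:1}(a) applied to the star $K_{1,\Delta}$ (exactly the subgraph the paper indicates), and the upper bound via the standard fact that the spectrum of the adjacency operator of a graph with degrees bounded by $\Delta$ lies in $[-\Delta,\Delta]$, which you verify by the usual Cauchy--Schwarz estimate. You merely spell out details the paper leaves implicit.
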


\section{Partitioning the edges of an embedded graph}

The {\em weight} $w(e)$ of an edge $e=uv$ is $\deg(u)+\deg(v)$.
We shall use the following results regarding existence of edges of small
weight (also called {\em light edges}) in graphs on surfaces.
If $\Sigma$ is a surface with Euler characteristic of $\chi(\Sigma)$,
then the non-negative integer $\gam=2-\chi(\Sigma)$ is called the {\em Euler genus} of $\Sigma$.

\begin{theorem}[Ivan\v{c}o~\cite{ivanco}]
Let $G$ be a finite graph with minimum degree at least three, embedded in an
orientable surface of Euler genus $\gam$.  Then $G$ contains
an edge $e$ with
$$w(e)\le \begin{cases}
\gam+13 & \text{if $\gam<6$}\\
2\gam+7 & \text{if $\gam\ge 6$}.
\end{cases}$$
\end{theorem}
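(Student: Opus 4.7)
My plan follows the classical Euler-formula-plus-discharging template, in the spirit of Kotzig's original proof of a light-edge theorem for planar $3$-connected graphs. I argue by contradiction, assuming every edge of $G$ has weight strictly greater than $B$, where $B$ denotes the relevant bound from the statement.

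As routine reductions, I may assume $G$ is connected and its embedding is $2$-cell, possibly after replacing the surface by one of no larger Euler genus. Since $G$ is simple and $\delta(G)\ge 3$, every face boundary has length at least three, so $2|E|\ge 3|F|$; combined with Euler's formula $|V|-|E|+|F|=2-\gam$ this yields the global bound
\[
\sum_{v\in V(G)} (\deg(v)-6) \;=\; 2|E|-6|V| \;\le\; 6\gam-12,
\]
which is the only input from topology.

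The main step is a discharging argument. Assign each vertex the initial charge $\mu(v)=\deg(v)-6$, so that the total charge is at most $6\gam-12$. The key structural consequence of the contradiction hypothesis is that any vertex of degree $d\le 6$ has every neighbor of degree at least $B-d+1$, so low-degree vertices are surrounded entirely by ``big'' vertices. I would introduce a rule in which each big vertex donates charge to each of its small-degree neighbors, the rate depending on the degree of the receiver (larger donation for smaller degree, so that a vertex of degree $3$ gets more than one of degree $6$). The rates must be calibrated so that after discharging every vertex has non-negative charge and, moreover, the total of the final charges strictly exceeds $6\gam-12$, contradicting the global bound.

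The quantitative form of the final inequality is what produces the two bounds in the statement: Kotzig-style estimates give the sharper constant $\gam+13$ when the genus is small, while for large genus the dominant term is the linear growth in $\gam$ coming from the Euler count, leading to the uniform bound $2\gam+7$; the two expressions coincide at $\gam=6$, which explains the split. The delicate point---where essentially all of the work lies---is the calibration of the donation rates for neighbors of degrees $3,4,5,6$ individually, and this is the obstacle I expect to consume the most effort, particularly in the small-genus regime where a finer case analysis is required to extract the tight constant $13$ that matches Kotzig's extremal planar configurations.
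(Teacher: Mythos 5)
You should first note that the paper does not prove this statement at all: it is quoted verbatim from Ivan\v{c}o \cite{ivanco} and used as a black box (only its consequence, Corollary~\ref{cor-light}, is needed). So there is no proof in the paper to compare against, and your proposal has to stand on its own merits.

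As it stands, it does not. Beyond the fact that all the actual content (the discharging rules and their verification) is deferred to a ``calibration'' you never carry out, the overall scheme has a structural flaw once $\gam\ge 2$. Your only topological input is $\sum_v(\deg(v)-6)=2|E|-6|V|\le 6\gam-12$, and for $\gam\ge 2$ the right-hand side is nonnegative, so the planar-style contradiction (every vertex ends with nonnegative charge while the total is negative) is simply unavailable. Worse, the contradiction you actually propose --- ``the total of the final charges strictly exceeds $6\gam-12$'' --- cannot be reached by discharging, because discharging conserves the total charge, which by your own Euler computation is at most $6\gam-12$. To salvage this you would have to prove, under the heavy-edge hypothesis, lower bounds on the \emph{retained} charge at specific vertices (the big neighbors of small vertices, of degree roughly $2\gam$) whose sum exceeds $6\gam-12$; nothing in your sketch addresses this, and it is exactly where the real work of Ivan\v{c}o's counting argument lies. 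A related symptom: your rules depend only on the receiver's degree $3,4,5,6$ and never on $\gam$, yet the bound to be proved is $2\gam+7$; moreover the hypothesis ``no edge of weight $\le B$'' constrains every edge, including edges whose endpoints both have degree about $B/2\approx\gam$, and restricting attention to vertices of degree at most $6$ discards that information. So the plan as written establishes the planar case ($\gam\le 1$, where $6\gam-12<0$) at best, and does not extend to the genus-dependent bounds that constitute the theorem.
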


\begin{theorem}[Jendrol' and Tuh\'arsky~\cite{jetruh}]
Let $G$ be a finite graph with minimum degree at least three, embedded in a
non-orientable surface of Euler genus~$\gam$.  Then $G$ contains
an edge $e$ with
$$ w(e)\le \begin{cases}
2\gam+11 & \text{if\/ $1\le\gam\le 2$}\\
2\gam+9 & \text{if\/ $3\le\gam\le 5$}\\
2\gam+7 & \text{if\/ $\gam\ge 6$.}
\end{cases}$$
\end{theorem}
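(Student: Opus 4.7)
The approach is a discharging argument on the embedded graph, paralleling the known proof for the orientable case. Let $G$ be a simple graph with $\delta(G)\ge 3$ 2-cell embedded in a non-orientable surface $\Sigma$ of Euler genus $g$, and suppose for contradiction that every edge $uv$ satisfies $\deg(u)+\deg(v)\ge W+1$, where $W$ is the threshold claimed in the relevant regime of $g$. Since $\chi(\Sigma)=2-g$, Euler's formula together with the identity $2|E|=\sum_v\deg(v)=\sum_f|f|$ gives
\[
\sum_{v\in V(G)}(\deg(v)-4) + \sum_{f\in F(G)}(|f|-4) = 4(|E|-|V|-|F|) = 4g-8.
\]
Assign each vertex the initial charge $\deg(v)-4$ and each face the initial charge $|f|-4$, so that the total initial charge is exactly $4g-8$.

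The plan is to define local discharging rules that shuttle charge from faces of length $\ge 5$ and from large-degree vertices onto triangular faces and onto low-degree vertices (degrees $3$ and $4$), calibrated so that under the hypothesis $\deg(u)+\deg(v)\ge W+1$ every element of $V(G)\cup F(G)$ ends up with a non-negative final charge, and in fact with charge bounded below by some explicit $\beta=\beta(g)>0$. Summed over all elements this forces $4g-8\ge\beta(|V|+|F|)$. On the other hand, $\delta(G)\ge 3$ and the face-length bound give $|E|\ge 3|V|/2$ and $|E|\ge 3|F|/2$, and combining these with $|V|+|F|=|E|+2-g$ produces a lower bound on $|V|+|F|$ that contradicts the upper bound just derived, as long as $W$ is chosen precisely at the claimed threshold.

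The heart of the proof, and the main technical obstacle, is the calibration of the rules together with the case analysis of local configurations around light vertices and short faces. Under the no-light-edge assumption, every neighbor of a vertex of degree $3$ has degree at least $W-2$, so a $3$-vertex can absorb charge through every incident edge from a large-degree endpoint; the rules must deliver enough to every $3$-vertex while also ensuring that the high-degree donors retain non-negative charge, and that triangular faces, whose initial charge is $-1$, can be rescued from their own incident vertices. For $g\in\{1,2\}$ the total charge $4g-8$ is still negative, so the argument closely mirrors the planar discharging and leaves no slack, which forces the additive constant $11$; for $g\in\{3,4,5\}$ the topology contributes some positive budget but not enough to absorb many light vertices, yielding the additive $9$; and for $g\ge 6$ the analysis stabilizes and matches the orientable constant $7$. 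An additional subtlety not present in the orientable case is that a single edge can appear twice on the boundary walk of the same face in a non-orientable 2-cell embedding, so such degenerate local configurations must be accounted for explicitly when the rules act along an incident edge.
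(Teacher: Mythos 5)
This statement is not proved in the paper at all: it is quoted verbatim from Jendrol' and Tuh\'arsky \cite{jetruh}, so there is no in-paper argument to compare against, and your proposal has to stand on its own. As it stands, it does not: it is a plan for a discharging proof rather than a proof. The discharging rules are never specified, the case analysis of local configurations is never carried out, and the three thresholds $2\gam+11$, $2\gam+9$, $2\gam+7$ are never actually derived from anything --- you yourself identify ``the calibration of the rules together with the case analysis'' as the heart of the proof and then leave it entirely undone. Asserting that the rules ``must deliver enough'' charge to $3$-vertices and triangles while donors stay nonnegative is a statement of the goal, not an argument; in these light-edge theorems essentially all of the difficulty, and the exact values of the additive constants, live in that omitted analysis.

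There is also a concrete flaw in the global accounting you do write down. For $\gam\ge 3$ the total charge $4\gam-8$ is nonnegative, so nonnegativity of all final charges yields no contradiction, and your patch --- final charges at least $\beta(\gam)>0$, hence $4\gam-8\ge\beta(|V|+|F|)$, to be contradicted by a lower bound on $|V|+|F|$ --- does not close. The inequalities $|E|\ge 3|V|/2$ and $|E|\ge 3|F|/2$ combined with $|V|+|F|=|E|+2-\gam$ give only $|E|\ge 6-3\gam$, which is vacuous, and a graph of minimum degree three embedded in a surface of Euler genus $\gam$ can perfectly well have bounded size, so no largeness of $|V|+|F|$ is available for free. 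What actually rescues the argument in the literature is that the no-light-edge hypothesis forces degrees of order $2\gam$ near every small vertex, and this quantitative input must be fed back into the counting (or into a refined Euler-formula estimate); that is precisely the step your sketch gestures at but never performs. Until the rules are written down and every vertex/face type is checked against the stated thresholds, the claim is unproved.
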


\noindent
Let us define 
$$d(\gam)=\begin{cases}
10 & \text{if $\gam\le 1$}\\
12 & \text{if $2\le \gam\le 3$}\\
2\gam+6 & \text{if $4\le\gam\le 5$}\\
2\gam+4 & \text{if $\gam\ge 6$}.
\end{cases}
$$
We conclude the following:

\begin{corollary}\label{cor-light}
Let $G$ be a finite graph with minimum degree at least three, embedded in a
surface of Euler genus $\gam$.  Then $G$ contains an edge $uv$ such that
$\deg(u)+\deg(v)\le d(\gam)+3$, and hence both $u$ and $v$ have degree at most $d(\gam)$.
\end{corollary}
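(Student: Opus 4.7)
The plan is a direct bookkeeping step: invoke the appropriate light-edge theorem according to whether the surface $\Sigma$ is orientable or non-orientable, and verify case by case that the resulting bound on $w(e)$ is at most $d(\gam)+3$. The function $d(\gam)$ has been set up precisely so that this upper bound matches, piece by piece, the maximum of Ivan\v co's and Jendrol'--Tuh\'arsky's bounds on $w(e)$, shifted down by $3$. Once $w(uv)\le d(\gam)+3$ is in hand, the hypothesis $\delta(G)\ge 3$ immediately gives $\deg(u)\le d(\gam)+3-\deg(v)\le d(\gam)$, and symmetrically for $v$, which is the second conclusion.

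First I would treat the orientable case. Since orientable surfaces have even Euler genus, only $\gam\in\{0,2,4\}$ fall into the range $\gam<6$ of Ivan\v co's theorem; for these, $\gam+13$ equals $13,15,17$, matching the corresponding values $d(0)+3=13$, $d(2)+3=15$, and $d(4)+3=2\gam+6+3=17$. For $\gam\ge 6$ (orientable), Ivan\v co gives $w(e)\le 2\gam+7=d(\gam)+3$.

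Second, for non-orientable surfaces I would apply Jendrol'--Tuh\'arsky's theorem and compare in the three sub-ranges $1\le\gam\le 2$, $3\le\gam\le 5$, and $\gam\ge 6$. In the first sub-range, $2\gam+11\in\{13,15\}$, matching $d(1)+3$ and $d(2)+3$. In the second, $2\gam+9$ matches $d(3)+3=15$ and $d(\gam)+3=2\gam+9$ for $\gam\in\{4,5\}$. For $\gam\ge 6$ the two theorems agree and give $2\gam+7=d(\gam)+3$.

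There is no real obstacle here beyond a finite case check; the only point to keep in mind is the parity constraint on the Euler genus of an orientable surface, which is what makes the piecewise definition of $d(\gam)$ (in particular the different behaviour on $\{2,3\}$ and on $\{4,5\}$) take the shape that it does. The second conclusion about the individual degrees is then immediate from $\delta(G)\ge 3$.
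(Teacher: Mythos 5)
Your proof is correct and is exactly the (implicit) argument behind the corollary in the paper: the paper states it as an immediate consequence of Ivan\v{c}o's and Jendrol'--Tuh\'arsky's theorems, and your case-by-case comparison of those bounds with $d(\gam)+3$, using the parity of the Euler genus of orientable surfaces, together with the observation that $\delta(G)\ge 3$ forces $\deg(u),\deg(v)\le d(\gam)$, is precisely the intended verification.
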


We show the following decomposition result for the graphs embedded in a fixed surface:

\begin{theorem}\label{thm-decomp}
Let $G$ be a finite graph embedded in a surface of Euler genus~$\gam$. Let
$s=d(\gam)$ and for each vertex $v\in V(G)$, let
$\hat\delta(v) = \min\{\deg(v),s\}$. Then $G$ can be decomposed as follows:

\begin{itemize}
\item[\rm (a)] $G=T\cup L$, where $T$ is a $2$-degenerate graph and\/
$\hat\delta(v)-2\le\deg_L(v)\le \hat\delta(v)$ for each vertex $v\in V(G)$.
\item[\rm (b)] $G=T\cup L$, where $T$ is a $2$-degenerate graph and\/
$\deg_L(v)\le \hat\delta(v)-2$ for each vertex $v\in V(G)$ with $\deg(v)\ge 2$,
and $\deg_L(v)=0$ if $\deg(v)\le 1$.
\item[\rm (c)] If\/ $G$ does not contain $K_{2,k}$ $(k\ge2)$ as a subgraph, then
$G=T\cup T_1\cup L$, such that\/ $T$ and\/ $T_1$ are forests,
$\Delta(T_1)\le (k-1)(s-1)+2$,
and\/ $\hat\delta(v)-2\le\deg_L(v)\le \hat\delta(v)$ for each vertex $v\in V(G)$.
\end{itemize}
\end{theorem}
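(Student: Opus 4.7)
The plan is to prove all three parts by a common induction on $|E(G)|$. The inductive step reduces as follows: if some vertex $v\in V(G)$ has $\deg_G(v)\le 2$ we recurse on $G-v$; otherwise $\delta(G)\ge 3$, and Corollary~\ref{cor-light} yields an edge $uv$ with $\deg_G(u),\deg_G(v)\le s$, and we recurse on $G-uv$. After the recursive call we reinsert the removed vertex or edge and decide where each affected edge is placed.

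For part~(a), in the vertex reduction I would place $vu$ in $L$ whenever $\deg_G(u)\le s$ and in $T$ whenever $\deg_G(u)>s$; in the edge reduction the single new edge $uv$ goes into $L$. The key bookkeeping fact is that $\hat\delta^{G-v}(u)$ and $\hat\delta_G(u)$ differ by exactly $1$ precisely when $\deg_G(u)\le s$, so the placement rule compensates the shift and keeps $\deg_L(u)$ inside $[\hat\delta_G(u)-2,\hat\delta_G(u)]$; at $v$ itself the bound is immediate because $\deg_G(v)\le 2\le s$. The at most two new edges of $T$ produced by a vertex reduction are all incident to the removed $v$, so $T$ stays $2$-degenerate by peeling $v$ first; in the edge reduction $T$ is unchanged.

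Part~(b) runs the same induction but in the vertex reduction we place \emph{every} edge at $v$ into $T$, giving $\deg_L(v)=0$ (which handles both the $\deg_G(v)\ge 2$ and $\deg_G(v)\le 1$ clauses simultaneously). Since $\hat\delta^{G-v}(u)\le\hat\delta_G(u)$ at every neighbor $u$, the inductive upper bound $\deg_{L'}(u)\le \hat\delta^{G-v}(u)-2$ is inherited verbatim as $\deg_L(u)\le\hat\delta_G(u)-2$; in the edge reduction, placing $uv$ in $L$ together with the matched-shift identity $\hat\delta^{G-uv}(u)=\hat\delta_G(u)-1$ preserves the strict upper bound. Two-degeneracy of $T$ is maintained as in (a).

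Part~(c) refines part~(a): the $L$-placement rule is unchanged, but the at most two edges that would have entered $T$ must now be distributed between two forests. When the removed $v$ has exactly two high-$G$-degree neighbors, one of these edges goes to $T$ and the other to $T_1$; otherwise $T_1$ is not touched. Each of $T$ and $T_1$ therefore gains at most one edge at $v$, so peeling $v$ first keeps both acyclic. The main obstacle is bounding $\Delta(T_1)$ at a high-degree vertex $u$: a $T_1$-edge $vu$ arises only when $v$ was processed as a common neighbor of $u$ and some second high-degree vertex $u'$, so $K_{2,k}$-freeness caps the number of such $v$ per pair $\{u,u'\}$ at $k-1$. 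Obtaining the prescribed $(k-1)(s-1)+2$ then requires further controlling the number of relevant companions $u'$ by roughly $s-1$; I expect this step, which must exploit the interaction between the inductive removal order and the $L$-degree slack at $u$, to be the technical heart of the proof.
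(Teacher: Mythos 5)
Your treatment of parts (a) and (b) is correct and follows essentially the paper's route: induction (the paper uses a minimal counterexample, which is the same thing) in which vertices of degree at most $2$ are removed with their edges routed into $T$ (or split between $T$ and $L$ with the bookkeeping you describe), and, once $\delta(G)\ge 3$, Corollary~\ref{cor-light} supplies an edge with both ends of degree at most $s$ that is deleted and placed in $L$.

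Part (c), however, has a genuine gap, which you yourself flag: your plan for bounding $\Delta(T_1)$ does not work and no substitute argument is given. Counting common neighbors per pair $\{u,u'\}$ via $K_{2,k}$-freeness only caps each pair's contribution at $k-1$; nothing caps the number of distinct high-degree companions $u'$ of a fixed high-degree vertex $u$ by anything like $s-1$. For instance, $u$ may be joined through many distinct degree-$2$ vertices to many distinct high-degree vertices (a planar, $K_{2,k}$-free configuration), and under your distribution rule $\deg_{T_1}(u)$ can grow linearly in $\deg_G(u)$, far beyond $(k-1)(s-1)+2$. The paper avoids this entirely with two ideas you are missing. First, the edge of a suppressed degree-$2$ vertex $v$ is put into $T_1$ only when its other endpoint $u$ has moderate degree, $\deg_G(u)\le k(s-1)+1$; since $u$ is adjacent to the small vertex $v$ it is itself not small, so $\deg_L(u)\ge s-2$, and then $\deg_{T_1}(u)\le \deg_G(u)-\deg_L(u)\le k(s-1)+1-(s-2)=(k-1)(s-1)+2$ directly --- no per-pair counting is needed. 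Second, the remaining case, where every neighbor of a degree-$2$ vertex has degree at least $k(s-1)+2$, is ruled out by suppressing all degree-$2$ vertices: $K_{2,k}$-freeness is used there to bound the multiplicity of the arising parallel edges, so that $\deg_H(x)\ge \deg_G(x)/k$ in the suppressed simple graph $H$, which has minimum degree at least $3$; applying Corollary~\ref{cor-light} to $H$ and comparing $H$-degrees with $G$-degrees yields a contradiction. So the role of $K_{2,k}$-freeness in the paper is to control multiplicities in the suppressed graph, not to bound $T_1$-degrees pairwise, and without these two steps your induction cannot close.
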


\begin{proof}
Let $G$ be a counterexample with the smallest number of edges.
We may assume that $G$ has no isolated vertices. Then $G$ is connected.
Let us call a vertex $v$ {\em small} if $\deg(v)\le s$.  Let $S$ be the set of all small vertices of $G$, and $S_2\subseteq S$ the set of all
vertices of $G$ of degree at most two.
No two vertices in $S\setminus S_2$ are
adjacent, as otherwise we can express $G-e$ as $T\cup L'$ or $T\cup T_1\cup L'$
and set $L=L'+e$, obtaining a decomposition of $G$. In the cases (a) and (c),
the same reduction works for any small vertices, i.e., no two vertices in $S$
are adjacent to each other.

Next, we claim that $\delta(G)\ge 2$.  Otherwise, let $v$ be a vertex of degree 
one, and let $w$ be its neighbor.  As $G$ is the smallest counterexample, 
there exists a decomposition $G-v=T'\cup L$ or $G-v=T'\cup T_1\cup L$.
In the cases (a) and (c), $w\not\in S$, hence $\deg_L(w) \ge s-2$.  
We let $T=T'+vw$ and obtain a contradiction, as $G$ is supposed to be a counterexample.

In the cases (a) and (b), we similarly conclude that $G$ has minimum degree 
at least three (by adding both edges incident with a vertex of degree 2 into $T$).
Since $G$ does not contain two adjacent small vertices, this contradicts 
Corollary~\ref{cor-light}.

It remains to consider the case (c).
Suppose that $G$ contains an edge $uv$ with $\deg(u)\le k(s-1)+1$ and $\deg(v)=2$,
and let $w$ be the neighbor of $v$ distinct from $u$.
By the minimality of $G$, there exists a decomposition $G-v=T'\cup T_1'\cup L$.
We set $T=T'+vw$ and $T_1=T_1'+uv$.
As $G$ does not contain two adjacent small vertices,
$\deg(u)>s$ and $\deg_L(u)\ge s-2$.  It follows that $\deg_{T_1}(u)\le k(s-1)+1-(s-2)=(k-1)(s-1)+2$, hence $\Delta(T_1)\le (k-1)(s-1)+2$.
This is a contradiction, thus each neigbor of a degree-$2$ vertex has degree at least $k(s-1)+2$.

Let $H$ be the simple graph obtained from $G$ by suppressing the degree-$2$ vertices and eliminating the arising parallel edges
(note that the multiplicity of each such edge is at most $k$, as otherwise $G$ would contain $K_{2,k}$ as a subgraph).
If $v\in V(H)$ is not adjacent to a $2$-vertex in $G$ (in particular, if 
$v\in S\setminus S_2$), then $\deg_H(v)=\deg_G(v)\ge 3$.
On the other hand, if $v$ is adjacent to a $2$-vertex, then we conclude that $\deg_H(v)\ge \frac{\deg_G(v)}{k}\ge\frac{k(s-1)+2}{k}\ge 3$.
It follows that the minimum degree of $H$ is at least three, and by Corollary~\ref{cor-light}, $H$ contains an edge $uv$ with $\deg_H(u)+\deg_H(v)\le s+3$.
We may assume that $\deg_H(u)\le \deg_H(v)$, and thus $\deg_G(u)\le k\deg_H(u)\le k\frac{s+3}{2}\le k(s-1)+1$.
We conclude that $u$ is not adjacent to a degree-$2$ vertex in $G$, and hence $\deg_G(u)=\deg_H(u)\le s$ and $u$ is small.
It follows that $uv\in E(G)$ and $v$ is not small, thus $\deg_G(v)>\deg_H(v)$ and $v$ is adjacent to a degree-$2$ vertex in $G$, and $\deg_G(v)\ge k(s-1)+2$.
However, using the fact that $u$ and $v$ do not have a common neighbor of degree $2$, we get
$\deg_H(v)\ge 1 + \lceil(\deg_G(v)-1)/k\rceil\ge 1 + \lceil (k(s-1) + 1)/k\rceil = s+1$, which is a contradiction.
\end{proof}

Consider a decomposition of the graph $K_{3,n}$ into a $2$-degenerate graph $T$ and a graph $L$ of maximum degree $s$.
Let $a_1$, $a_2$ and $a_3$ be the three vertices of degree $n$ and let $B$ be the set of $n$ vertices
of degree three.  Let $B'\subseteq B$ be the set of vertices that are not incident with an edge of $L$.
Since the maximum degree of $L$ is $s$, we obtain $|B'|\ge n-3s$.  As $K_{3,3}$ is not $2$-degenerate, $|B'|\le 2$.
Therefore, $n-3s\le 2$, and $n\le 3s+2$.
As $K_{3,2\gam+2}$ can be embedded in a surface of Euler genus $\gam$ (Ringel \cite{Ri65a}), 
it is not possible to improve the bound on the maximum degree of $L$ in such a decomposition below
$\tfrac{2}{3}\gam$, i.e., $\Delta(L)=\Omega(\gam)$.

\section{Spectral radius of embedded graphs}

We now use the decomposition theorem to obtain a bound on the spectral radius of
graphs of bounded genus. In all proofs we assume that the graph $G$ is finite. 
However, the proof given for the finite case extends to infinite graphs by
applying Theorem \ref{thm:basic} and taking the limit over larger and larger 
finite subgraphs.

\begin{theorem}
\label{thm-radius}
Let $G$ be a graph embedded in a surface of Euler genus~$\gam$.  
\begin{itemize}
\item[\rm (a)] If\/ $\Delta(G)\ge d(\gam)+2$, then 
$\rho(G)\le \sqrt{8(\Delta(G)-d(\gam))} + d(\gam)$.
\item[\rm (b)] If\/ $G$ does not contain $K_{2,k}$ $(k\ge2)$ as a subgraph
and $\Delta(G)\ge d(\gam)$, then
$$\rho(G)\le 2\sqrt{\Delta(G)-d(\gam)+1}+2\sqrt{(k-1)(d(\gam)-1)+1}+d(\gam).$$
\end{itemize}
\end{theorem}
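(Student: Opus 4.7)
The plan is to apply the decomposition theorem (Theorem \ref{thm-decomp}) and control the spectral radius of each piece separately, then combine via subadditivity (Lemma \ref{lem:1}(b)). For each piece I will choose between Hayes' bound (Theorem \ref{thm-hayes}) and the trivial bound $\rho(H)\le \Delta(H)$.

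For part (a), I would apply Theorem \ref{thm-decomp}(a) to obtain $G=T\cup L$ with $T$ being $2$-degenerate and $\deg_L(v)\le \hat\delta(v)\le s=d(\gam)$ for every $v$. The $L$-factor is easy: since $\Delta(L)\le d(\gam)$ we get $\rho(L)\le d(\gam)$ by Lemma~\ref{lem:2}. For $T$, the bound $\deg_L(v)\ge \hat\delta(v)-2$ gives $\deg_T(v)\le \deg(v)-(\hat\delta(v)-2)$, which for vertices with $\deg(v)\ge s$ yields $\deg_T(v)\le \deg(v)-s+2$, and for vertices with $\deg(v)<s$ is even smaller. Hence $\Delta(T)\le \Delta(G)-d(\gam)+2$. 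Since $T$ is $2$-degenerate it admits an orientation of maximum indegree $2$, so Theorem \ref{thm-hayes} (with $k=2$) gives $\rho(T)\le 2\sqrt{2(\Delta(T)-2)}\le \sqrt{8(\Delta(G)-d(\gam))}$, provided $\Delta(T)\ge 4$; in the degenerate subcase $\Delta(T)\le 3$ the hypothesis $\Delta(G)\ge d(\gam)+2$ makes the target bound larger than $3\ge \rho(T)$, so the conclusion still holds. Summing gives (a).

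For part (b), I would apply Theorem \ref{thm-decomp}(c) to obtain $G=T\cup T_1\cup L$, where $T,T_1$ are forests (hence $1$-degenerate, admitting orientations of maximum indegree $1$), $\Delta(T_1)\le (k-1)(s-1)+2$, and again $\hat\delta(v)-2\le \deg_L(v)\le \hat\delta(v)\le s$. As before, $\rho(L)\le d(\gam)$, and the degree estimate $\Delta(T)\le \Delta(G)-d(\gam)+2$ gives, via Theorem \ref{thm-hayes} with $k=1$, the bound $\rho(T)\le 2\sqrt{\Delta(T)-1}\le 2\sqrt{\Delta(G)-d(\gam)+1}$. For the second forest, Theorem \ref{thm-hayes} yields $\rho(T_1)\le 2\sqrt{\Delta(T_1)-1}\le 2\sqrt{(k-1)(d(\gam)-1)+1}$. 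Iterating Lemma \ref{lem:1}(b) twice bounds $\rho(G)$ by the sum of these three quantities, which is exactly the claimed bound.

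Finally, the extension from finite to infinite graphs follows from Theorem \ref{thm:basic}: for any increasing exhaustion of $G$ by finite subgraphs $H_i$, apply the finite bound to each $H_i$ (each of which is embedded in the same surface and has maximum degree at most $\Delta(G)$) and take the supremum.

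I do not expect any single step to be a serious obstacle since all the hard work is packaged into Theorem \ref{thm-decomp} and Theorem \ref{thm-hayes}. The only mildly delicate point is verifying that Hayes' hypothesis $\Delta\ge 2k$ is satisfied, or, when it is not, that the trivial bound $\rho\le \Delta$ is already dominated by the claimed expression; this requires a brief case check using the assumptions $\Delta(G)\ge d(\gam)+2$ in (a) and $\Delta(G)\ge d(\gam)$ in (b).
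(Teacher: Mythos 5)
Your proposal is correct and follows essentially the same route as the paper: decompose $G$ via Theorem~\ref{thm-decomp}(a) (resp.\ (c)), bound $\rho(T)$ (and $\rho(T_1)$) by Theorem~\ref{thm-hayes} using the degree estimate $\Delta(T)\le\Delta(G)-d(\gam)+2$, bound $\rho(L)\le\Delta(L)\le d(\gam)$ trivially, and combine by subadditivity, with Theorem~\ref{thm:basic} handling infinite graphs. Your extra check of Hayes' hypothesis $\Delta\ge 2k$ in the degenerate low-degree cases is a point the paper glosses over, and you resolve it correctly.
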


\begin{proof}
Let $G=T\cup L$ be a decomposition as guaranteed by Theorem~\ref{thm-decomp}(a).
Note that every vertex of degree $\ge d(\gam)$ satisfies 
$\deg_T(v) = \deg_G(v) - \deg_L(v) \le \deg_G(v) - d(\gam)+2$ and every
vertex of degree $< d(\gam)$ in $G$ satisfies $\deg_T(v)\le 2$.
Thus $\Delta(T)-2\le \Delta(G)-d(\gam)$. By Theorem~\ref{thm-hayes}, $\rho(T)\le 2\sqrt{2\Delta(T)-4}\le 2\sqrt{2(\Delta(G)-d(\gam))}$.
Furthermore, $\rho(L)\le \Delta(L)\le d(\gam)$.  
The bound on $\rho(G)$ in (a) follows therefrom by the subadditivity of the spectral 
radius (Lemma \ref{lem:1}(b)). Part (b) follows similarly from 
Theorem~\ref{thm-decomp}(c).
\end{proof}

The bound of Theorem \ref{thm-radius}(a) can be improved when $\Delta(G)$ is large
by using the decomposition of Theorem~\ref{thm-decomp}(b) instead of (a).
We present this improvement only in the special case of
planar graphs, where another slight improvement is possible.

\begin{theorem}\label{thm-radiusplan}
A planar graph\/ $G$ of maximum degree $\Delta=\Delta(G)\ge 10$ has
$$
    \rho(G) \le \sqrt{8\Delta-80}+2\sqrt{21} < \sqrt{8\Delta-80}+9.17
$$
and
$$
    \rho(G) \le \sqrt{8\Delta-16}+2\sqrt{15} < \sqrt{8\Delta-16}+7.75
$$
Furthermore, if\/ $G$ does not contain a separating $4$-cycle, then
$$
 \rho(G)\le 2\sqrt{\Delta-9}+2\sqrt{19}+2\sqrt{21} < 2\sqrt{\Delta-9}+17.883.
$$
\end{theorem}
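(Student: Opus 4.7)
The plan is to derive all three bounds by applying Theorem~\ref{thm-decomp} with Euler genus $\gam=0$ (so $s=d(\gam)=10$) and combining the resulting decomposition with Hayes' bound (Theorem~\ref{thm-hayes}) and the classical estimate $\rho(F)\le 2\sqrt{\Delta(F)-1}$ for a forest $F$, glued together by subadditivity of the spectral radius (Lemma~\ref{lem:1}(b)). Throughout, every subgraph $H\subseteq G$ is planar and hence admits an orientation with maximum indegree $3$, which is exactly the input needed to feed Theorem~\ref{thm-hayes} with $k=3$.

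For the first bound I take the decomposition $G=T\cup L$ of Theorem~\ref{thm-decomp}(a). The condition $\hat\delta(v)-2\le\deg_L(v)\le\hat\delta(v)$ gives $\deg_T(v)\le 2$ whenever $\deg_G(v)\le 10$ and $\deg_T(v)\le \deg_G(v)-8$ when $\deg_G(v)>10$; since $\Delta\ge 10$, one gets $\Delta(T)\le\Delta-8$. As $T$ is $2$-degenerate, Theorem~\ref{thm-hayes} with $k=2$ yields $\rho(T)\le\sqrt{8\Delta-80}$, while the planar structure of $L$ together with $\Delta(L)\le 10$ gives $\rho(L)\le 2\sqrt{21}$ via Theorem~\ref{thm-hayes} with $k=3$; adding produces the first inequality. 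For the second inequality I instead invoke Theorem~\ref{thm-decomp}(b): every vertex of degree at least two now has $\deg_L(v)\le s-2=8$, hence $\Delta(L)\le 8$ and Theorem~\ref{thm-hayes} with $k=3$ gives $\rho(L)\le 2\sqrt{15}$; this time one has only $\Delta(T)\le\Delta$, but Theorem~\ref{thm-hayes} with $k=2$ still delivers $\rho(T)\le\sqrt{8\Delta-16}$.

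For the ``furthermore'' statement I would first show that, since $\Delta\ge 10$, a planar $G$ with no separating $4$-cycle cannot contain $K_{2,3}$ as a subgraph. Fixing a planar embedding of $G$, an embedded $K_{2,3}\subseteq G$ splits the sphere into three quadrilateral faces; if none of them contained a vertex of $G$ in its interior, then $V(G)$ would consist only of the five vertices of the $K_{2,3}$, contradicting $\Delta\ge 10$, whereas a face with an interior vertex is bounded by a $4$-cycle of $G$ which separates that interior from the vertex of the $K_{2,3}$ lying on the opposite side. Having excluded $K_{2,3}$, I apply Theorem~\ref{thm-decomp}(c) with $k=3$ to obtain $G=T\cup T_1\cup L$ with $T,T_1$ forests, $\Delta(T_1)\le(k-1)(s-1)+2=20$, and the same degree bounds on $L$ as in part~(a); the bookkeeping of the first bound again gives $\Delta(T)\le\Delta-8$ and $\Delta(L)\le 10$, so the forest estimate yields $\rho(T)\le 2\sqrt{\Delta-9}$ and $\rho(T_1)\le 2\sqrt{19}$, Hayes' bound gives $\rho(L)\le 2\sqrt{21}$, and subadditivity finishes the argument. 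The step I expect to be the main obstacle is precisely the topological argument excluding $K_{2,3}$: one has to really use a fixed embedding of $G$ to identify the face of the embedded $K_{2,3}$ that contains interior vertices and to verify that the bounding $4$-cycle truly separates $G$. A minor secondary nuisance is that Theorem~\ref{thm-hayes} formally requires $\Delta(T)\ge 2k$, so in the tight regime where $\Delta$ is only just above $10$ one should fall back on the trivial bound $\rho(T)\le\Delta(T)$ to absorb the corner cases.
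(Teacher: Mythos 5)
You follow essentially the same route as the paper: Theorem~\ref{thm-decomp}(a) with Euler genus $0$ (so $s=d(0)=10$) combined with Theorem~\ref{thm-hayes} for the $2$-degenerate part $T$ (with $k=2$) and for the planar low-degree part $L$ (with $k=3$) gives the first inequality, Theorem~\ref{thm-decomp}(b) gives the second, and the third is obtained exactly as in the paper by ruling out $K_{2,3}$ (the paper likewise observes that in a planar graph without separating $4$-cycles an embedded $K_{2,3}$ forces $V(G)=V(K_{2,3})$, hence $\Delta\le 4$, contradicting $\Delta\ge 10$) and then using Theorem~\ref{thm-decomp}(c) with $k=3$, so that $\rho(T)\le 2\sqrt{\Delta-9}$, $\rho(T_1)\le 2\sqrt{19}$ and $\rho(L)\le 2\sqrt{21}$.

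The one defect is the corner case you flag at the end: your proposed patch does not suffice for the first inequality. When $\Delta\in\{10,11\}$ the decomposition of Theorem~\ref{thm-decomp}(a) only gives $\Delta(T)\le 2$ resp.\ $3$, Hayes' theorem is unavailable for $T$ (it requires $\Delta(T)\ge 4$), and the trivial bound $\rho(T)\le\Delta(T)$ yields only $\rho(G)\le 2+2\sqrt{21}$ resp.\ $3+2\sqrt{21}$, both strictly larger than the claimed $\sqrt{8\Delta-80}+2\sqrt{21}$, which equals $2\sqrt{21}$ for $\Delta=10$ and $\sqrt{8}+2\sqrt{21}\approx 11.99$ for $\Delta=11$; note that a $2$-degenerate planar graph of maximum degree $3$ (e.g.\ a long ladder) can have spectral radius arbitrarily close to $3$, so no cheap improvement of the bound on $\rho(T)$ rescues this. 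The paper resolves it differently: for $\Delta\le 11$ it applies Theorem~\ref{thm-hayes} with $k=3$ directly to the planar graph $G$, getting $\rho(G)\le 2\sqrt{3(\Delta-3)}\le 2\sqrt{24}$, which lies within all three claimed bounds, and runs the decomposition argument only for $\Delta\ge 12$ (where your fallback $\rho(T)\le\Delta(T)\le 3\le\sqrt{8\Delta-80}$ does cover the eventuality $\Delta(T)<4$). With this substitution your argument is complete and coincides with the paper's proof; your bookkeeping for the three bounds and the topological exclusion of $K_{2,3}$ are exactly what the paper intends.
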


\begin{proof}
We proceed as in the proof of Theorem~\ref{thm-radius}, considering 
the decomposition $G=T\cup L$. We may assume that $\Delta \ge 12$ since the
bounds follow easily for $\Delta\le 11$ by using Theorem \ref{thm-hayes}.
We estimate the contribution of $T$ in the same way.  
However, we use Theorem~\ref{thm-hayes}
to bound the spectral radius of $L$.  Every planar graph has an orientation with maximum indegree $3$,
hence $\rho(L)\le 2\sqrt{3(\Delta(L)-3)}\le 2\sqrt{21}< 9.17$.
For the second inequality we apply Theorem~\ref{thm-decomp}(b) instead of (a).

Consider now the case that $G$ does not contain separating $4$-cycles.
If $G$ does not contain $K_{2,3}$ as a subgraph, then the bound follows as in
Theorem~\ref{thm-radius}, using the fact that $\rho(L)\le 2\sqrt{21}$.
So suppose that $K_{2,3}\subseteq G$. As $G$ has no separating $4$-cycles, it is easy to see that $V(G)=V(K_{2,3})$, and thus $\Delta(G)\le 4$. 
\end{proof}

In the estimates of Theorem \ref{thm-radius}, 
we can improve the dependency on the genus using the following lemma:

\begin{lemma}\label{lemma-smalldeg}
Let $\varepsilon>0$ be a real number and let 
$G$ be a finite graph embedded in a surface of Euler genus $\gam$, 
with $\Delta(G)=O(\gam)$. 
Then $\rho(G)=O(\varepsilon^{-1}\gam^{\frac{1+\varepsilon}{2}})$.
\end{lemma}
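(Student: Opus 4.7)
My plan is to peel $G$ in degeneracy order, group the peelings into geometrically spaced layers indexed by the Euler bound on the minimum degree, and bound each layer's spectral contribution via a bipartite inequality. Set $q=\gam^{\varepsilon}$ and $k=\lceil 1/(2\varepsilon)\rceil$, so that $q^k\ge\sqrt{\gam}$. I would remove vertices of minimum degree one by one, generating an order $v_1,v_2,\ldots$; since every residual subgraph embeds on the same surface, Euler's formula forces the vertex $v_t$ peeled when the residual has $m_t$ vertices to have degree at most $6+6\gam/m_t$. Group the peeled vertices into $V_0=\{v_t:m_t>\gam\}$ and $V_i=\{v_t:\gam/q^i<m_t\le\gam/q^{i-1}\}$ for $1\le i\le k$, leaving a residual ``core'' $V_{\mathrm{core}}$ with $|V_{\mathrm{core}}|\le\sqrt{\gam}$. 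Every $v_t\in V_i$ then has at most $D_i:=6+6q^i$ later neighbours, and $|V_i|\le\gam(q-1)/q^i$; the crucial point is that the product $D_i|V_i|$ is uniformly $O(\gam q)$ for $1\le i\le k$.

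For each $i$ let $L_i$ consist of the edges of $G$ going from some $v\in V_i$ to a later-peeled vertex, and set $L_{\mathrm{core}}=G[V_{\mathrm{core}}]$; then $G=L_0\cup\cdots\cup L_k\cup L_{\mathrm{core}}$, so Lemma~\ref{lem:1}(b) gives $\rho(G)\le\rho(L_0)+\sum_{i=1}^k\rho(L_i)+\rho(L_{\mathrm{core}})$. For $i\ge 1$ I would split $L_i=L_i^{\mathrm{bip}}\cup L_i^{\mathrm{int}}$ according to whether the later endpoint lies outside or inside $V_i$. The bipartite $L_i^{\mathrm{bip}}$ has maximum degree at most $D_i$ on the $V_i$-side and at most $|V_i|$ on the other side (each outside vertex has at most $|V_i|$ neighbours in $V_i$), so the standard bipartite inequality $\rho\le\sqrt{\Delta_L\Delta_R}$ yields $\rho(L_i^{\mathrm{bip}})\le\sqrt{D_i|V_i|}=O(\sqrt{\gam q})=O(\gam^{(1+\varepsilon)/2})$. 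The internal graph $L_i^{\mathrm{int}}$ is embedded on a surface of Euler genus at most $\gam$ on $|V_i|\le O(\gam)$ vertices, so $|E(L_i^{\mathrm{int}})|\le 3|V_i|+3\gam=O(\gam)$, and the trace bound $\rho^2\le 2|E|$ gives $\rho(L_i^{\mathrm{int}})=O(\sqrt{\gam})$.

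The boundary pieces are handled directly. The graph $L_0$ is $12$-degenerate since each $v\in V_0$ has at most $12$ later neighbours, and it satisfies $\Delta(L_0)\le\Delta(G)=O(\gam)$; hence Theorem~\ref{thm-hayes} gives $\rho(L_0)\le 2\sqrt{12\,\Delta(L_0)}=O(\sqrt{\gam})$. Trivially $\rho(L_{\mathrm{core}})\le |V_{\mathrm{core}}|-1=O(\sqrt{\gam})$. Summing the $k=O(1/\varepsilon)$ layer contributions of size $O(\gam^{(1+\varepsilon)/2})$ with the two boundary terms of size $O(\sqrt{\gam})$ then yields the claimed bound $\rho(G)=O(\varepsilon^{-1}\gam^{(1+\varepsilon)/2})$.

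The main obstacle will be the bipartite estimate on $L_i^{\mathrm{bip}}$. The ``outside'' side $V(G)\setminus V_i$ may contain vertices of degree up to $\Delta(G)=O(\gam)$, so the tighter bound $\Delta_R\le|V_i|$ is essential; this in turn needs $|V_i|\le O(\gam)$, which can fail for $V_0$ whenever $|V(G)|\gg\gam$. That is precisely why $V_0$ has to be excised and handled separately via Theorem~\ref{thm-hayes} applied to its $12$-degenerate structure. The geometric spacing $q=\gam^{\varepsilon}$ is then the right choice: it makes $D_i|V_i|=O(\gam q)$ hold uniformly across the $O(1/\varepsilon)$ relevant layers, turning the final sum into $O(\varepsilon^{-1}\gam^{(1+\varepsilon)/2})$.
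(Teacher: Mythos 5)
Your argument is correct, and it reaches the bound by a route that overlaps with the paper's in its skeleton but differs in the per-layer estimates. The paper also peels greedily into $O(\varepsilon^{-1})$ geometrically scaled layers, but it defines the layers by degree thresholds $\gam^{\varepsilon i}+6$: the residual $H_i$ then has minimum degree at least $\gam^{\varepsilon i}+6$, so Euler's formula gives $|V(H_i)|=O(\gam^{1-\varepsilon i})$ and hence $\Delta(G_{i+1})\le\Delta(H_i)\le|V(H_i)|=O(\gam^{1-\varepsilon i})$, while $G_{i+1}$ is $O(\gam^{\varepsilon(i+1)})$-degenerate by construction. Because each layer carries \emph{both} a degeneracy bound and a maximum-degree bound, the paper applies Theorem~\ref{thm-hayes} uniformly to every layer and sums with Lemma~\ref{lem:1}(b); no special treatment of an initial piece or a residual core is needed. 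You instead index the layers by residual size $\gam/q^i$, which costs you maximum-degree control inside a layer (an outside endpoint can have degree $\Theta(\gam)$), and you recover it with the bipartite bound $\rho\le\sqrt{\Delta_L\Delta_R}$ together with the trivial estimate $\Delta_R\le|V_i|$ --- the same trivial inequality $\Delta\le|V|$ the paper uses, just deployed on the other side of the decomposition --- plus a trace bound $\rho^2\le 2|E|$ on the induced parts and a separate Hayes/trivial argument for $L_0$ and the core. Both routes work and give the same asymptotics; the paper's is a bit more economical (one tool, Theorem~\ref{thm-hayes}, applied $k$ times), whereas yours imports the bipartite singular-value inequality, which is not stated in the paper but is standard (and could in any case be replaced, up to a factor $2$, by Theorem~\ref{thm-hayes} applied with all edges oriented into the $V_i$ side). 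Your analysis of $L_0$ even gives a slightly better contribution, $O(\sqrt\gam)$, than the paper's first layer, though this does not affect the final bound.
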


\begin{proof}
Let $k=\left\lceil \varepsilon^{-1}\right\rceil$.
We construct a decomposition $G=G_1\cup \cdots \cup G_k$, such that 
for $i=1,\ldots, k$, $\Delta(G_i)=O(\gam^{1-\varepsilon (i-1)})$ 
and $G_i$ is $O(\gam^{\varepsilon i})$-degenerate.
By Theorem~\ref{thm-hayes}, $\rho(G_i)=O(\gam^{(1+\varepsilon)/2})$,
and by the subadditivity of the spectral radius, $\rho(G)=O(\varepsilon^{-1}\gam^{(1+\varepsilon)/2})$.

Suppose that we have already constructed graphs $G_1$, $G_2$, \ldots, $G_i$.
If $i=0$, then let $H_0=G$, otherwise let $H_i$ be the complement of
$G_1\cup\cdots\cup G_i$ in $G$, i.e., the subgraph of $G$ consisting of
the edges that do not belong to $G_1\cup\cdots\cup G_i$ and the vertices
incident with these edges.  Let $S_{i+1}$ be the set of vertices 
obtained in the following way: we take a vertex of degree less than
$\gam^{\varepsilon(i+1)}+6$ in $H_i$, add it to $S_{i+1}$, and remove it from $H_i$. 
We repeat this process as long as the graph contains vertices of degree
less than $\gam^{\varepsilon(i+1)}+6$.  We let $G_{i+1}$ consist of the edges incident with at least one vertex of $S_{i+1}$.
This ensures that $G_{i+1}$ is $(\gam^{\varepsilon(i+1)}+6)$-degenerate.
Note that $H_{i+1}=H_i-S_{i+1}$. 

The construction also ensures that for $i\ge 1$, the minimum degree
of $H_i$ is at least $\gam^{\varepsilon i}+6$ (if $H_i\neq\emptyset$, 
which we may assume), hence $2|E(H_i)|\ge (6+\gam^{\varepsilon i})|V(H_i)|$.
On the other hand, as $H_i$ is embedded in a surface of Euler genus $\gam$, $2|E(H_i)|\le 6|V(H_i)| - 12 + 6\gam$,
hence $\Delta(H_i)\le |V(H_i)| = O(\gam^{1-\varepsilon i})$.  Since $G_{i+1}\subseteq H_i$, this implies the claimed upper
bound on the maximum degree of $G_{i+1}$ and completes the proof.
\end{proof}

The exponent in the bound of Lemma~\ref{lemma-smalldeg} cannot be improved below 
$1/2$, as the complete graph on $\Omega(\sqrt{\gam})$ vertices
can be embedded in a surface of Euler genus $\gam$.
Together with the decompositions given by Theorem~\ref{thm-decomp}, 
Lemma~\ref{lemma-smalldeg} gives:

\begin{theorem}\label{thm-radiussqg}
If a graph $G$ has Euler genus $\gam$, then
$$\rho(G)\le \sqrt{8\Delta(G)}+O(\gam^{\frac{1}{2}}\log \gam).$$
If $k$ is a positive integer and $G$ does not contain $K_{2,k}$ as a subgraph, then
$$\rho(G)\le 2\sqrt{\Delta(G)}+O(\gam^{\frac{1}{2}}\log\gam).$$
\end{theorem}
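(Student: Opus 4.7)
The plan is to combine the edge partition from Theorem~\ref{thm-decomp} with the sharper estimate of Lemma~\ref{lemma-smalldeg} applied to the bounded-degree piece $L$. For part (a), decompose $G = T \cup L$ via Theorem~\ref{thm-decomp}(a). As in the proof of Theorem~\ref{thm-radius}(a), the constraint $\deg_L(v) \ge \hat\delta(v)-2$ forces $\Delta(T) \le \Delta(G) - d(\gam) + 2$, so the $2$-degeneracy of $T$ together with Theorem~\ref{thm-hayes} gives $\rho(T) \le 2\sqrt{2(\Delta(T)-2)} \le \sqrt{8\Delta(G)}$.

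The new ingredient, and the source of the claimed improvement, is to bound $\rho(L)$ by invoking Lemma~\ref{lemma-smalldeg} rather than the trivial estimate $\rho(L) \le d(\gam) = O(\gam)$. Since $L \subseteq G$ inherits the embedding of Euler genus $\gam$ and has $\Delta(L) = O(\gam)$, the hypotheses of that lemma are met. The plan is to choose $\varepsilon := 2/\ln\gam$, which simultaneously makes $\gam^{\varepsilon/2} = e$ a constant and keeps $\varepsilon^{-1} = \Theta(\log\gam)$; the lemma's bound $O(\varepsilon^{-1}\gam^{(1+\varepsilon)/2})$ then collapses to $O(\sqrt{\gam}\log\gam)$. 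Adding this to the bound on $\rho(T)$ via subadditivity (Lemma~\ref{lem:1}(b)) yields part (a).

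For part (b), replace Theorem~\ref{thm-decomp}(a) by Theorem~\ref{thm-decomp}(c), giving $G = T \cup T_1 \cup L$. The same calculation applied to $\deg_T(v)+\deg_{T_1}(v)=\deg_G(v)-\deg_L(v)$ yields $\Delta(T) \le \Delta(G) - d(\gam) + 2$, and since $T$ is now a forest we have $\rho(T) \le 2\sqrt{\Delta(G) - d(\gam) + 1} \le 2\sqrt{\Delta(G)}$. The forest $T_1$ has $\Delta(T_1) \le (k-1)(s-1)+2 = O(k\gam)$, hence $\rho(T_1) = O(\sqrt{k\gam})$, and $\rho(L) = O(\sqrt{\gam}\log\gam)$ exactly as before. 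Summing the three pieces gives the second inequality. Degenerate regimes (very small $\gam$, where $\log\gam$ is ill-defined, or $\Delta(G) = O(\gam)$, in which case the $\sqrt{\Delta(G)}$ term is already absorbed by the error) are handled directly by Theorem~\ref{thm-radius} or by Lemma~\ref{lemma-smalldeg} alone.

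The main obstacle is the parameter balancing in Lemma~\ref{lemma-smalldeg}: no fixed constant $\varepsilon>0$ gives a $\sqrt{\gam}\log\gam$ bound, since each such $\varepsilon$ produces only the strictly weaker $\Theta(\gam^{(1+\varepsilon)/2})$. What makes the argument go through is the explicit $\varepsilon^{-1}$ prefactor recorded in the statement of the lemma, which allows $\varepsilon$ to shrink with $\gam$; the choice $\varepsilon \sim 1/\log\gam$ is essentially the unique point where the two competing factors balance, so any further asymptotic improvement past $\sqrt{\gam}\log\gam$ would have to come from sharpening the base lemma rather than from the present bookkeeping.
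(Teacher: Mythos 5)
Your proposal is correct and follows essentially the same route as the paper: decompose via Theorem~\ref{thm-decomp}, bound the $2$-degenerate (resp.\ forest) parts exactly as in Theorem~\ref{thm-radius}, and bound the bounded-degree part $L$ by Lemma~\ref{lemma-smalldeg} with $\varepsilon\sim 1/\log\gam$ (the paper takes $\varepsilon=(\log\gam)^{-1}$, which is the same balancing you describe). The only differences are cosmetic---your explicit handling of the $T_1$ term and of the degenerate regimes is bookkeeping the paper leaves implicit.
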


\begin{proof}
We apply the decompositions given by Theorem~\ref{thm-decomp}. For the small
degree subgraph $L$ we use Lemma \ref{lemma-smalldeg} with
$\varepsilon = (\log\gam)^{-1}$ to conclude
that $\rho(L)=O(\varepsilon^{-1}\gam^{\frac{1+\varepsilon}{2}})
=O(\gam^{\frac{1}{2}}\log\gam)$. 
\end{proof}

\section{Lower bounds}

In this section we show that the bounds given by Theorem~\ref{thm-radius} are tight up
to the additive term.  As the spectral radius of an infinite $d$-regular tree
is $2\sqrt{d-1}$, for any $\varepsilon>0$ there exists a finite tree $T$ with
$\rho(T)>2\sqrt{\Delta(T)-1}-\varepsilon$, matching the upper bound
$2\sqrt{\Delta(G)}+O(1)$ for planar graphs excluding $K_{2,k}$.

Let $k$ and $d$, $k<d$, be integers such that $d$ is divisible by $k$.  Consider now the following
sequence of graphs $H^{k,d}_i$.  Let $H^{k,d}_0=K_{k, d-k}$.
The graph $H^{k,d}_i$ contains $k\left(\frac{d-k}{k}\right)^{i+1}$ vertices of degree $k$,
let $S_{i+1}$ be the set of these vertices.  The graph $H^{k,d}_{i+1}$ is obtained from $H^{k,d}_i$
by partitioning $S_{i+1}$ into $k$-tuples in some canonical way, then for each such $k$-tuple $C$ adding
$d-k$ new vertices adjacent to each vertex of $C$ (the newly added vertices form the set $S_{i+2}$).
The infinite graph $H^{k,d}$ is the limit of the sequence of the graphs $H^{k,d}_i$.
See Figure~\ref{fig-hd} for an example with $k=2$ and $d=8$.
The following properties are easy to prove:

\begin{figure}
\center{\epsfbox{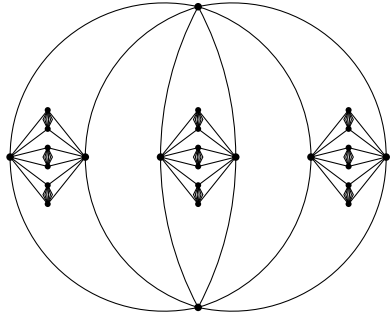}}
\caption{The graph $H^{2,8}$}
\label{fig-hd}
\end{figure}

\begin{itemize}
\item $\Delta(H^{k,d})=d$.
\item $\rho(H^{k,d})=\sup_i \rho(H^{k,d}_i)$.
\item The graphs $H^{2,d}_i$ are planar (assuming the natural partitionings of the sets $S_i$).
\item The graphs $H^{k,d}_i$ are $k$-degenerate.
\end{itemize}

\begin{lemma}\label{lemma-lbound}
The spectral radius of $H^{k,d}$ is $2\sqrt{k(d-k)}$.
\end{lemma}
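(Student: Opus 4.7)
The plan is to prove matching upper and lower bounds equal to $2\sqrt{k(d-k)}$. The upper bound is immediate from Hayes' theorem: each $H^{k,d}_i$ is $k$-degenerate (as noted in the list of properties above) and has maximum degree at most $d$, and because $k\mid d$ and $k<d$ we have $d\ge 2k$, so Theorem~\ref{thm-hayes} gives $\rho(H^{k,d}_i)\le 2\sqrt{k(d-k)}$. Together with $\rho(H^{k,d})=\sup_i\rho(H^{k,d}_i)$ (Theorem~\ref{thm:basic}), the upper bound follows.

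For the matching lower bound I would construct explicit approximate eigenvectors in $\ell^2(V(H^{k,d}))$. The construction of $H^{k,d}$ naturally arranges the $K_{k,d-k}$ blocks into a tree, and every vertex $v$ lies on the $A$-side of exactly one block; I call the depth of that block (with the initial block at depth $0$) the \emph{level} $\ell(v)$ of $v$. The neighborhoods then have a clean description: a level-$i$ vertex with $i\ge 1$ has exactly $k$ neighbors at level $i-1$ (the $A$-side of its parent block) and $d-k$ neighbors at level $i+1$ (the $B$-side of its own block), while a level-$0$ vertex has only the $d-k$ neighbors at level $1$; furthermore the number of level-$i$ vertices equals $k\bigl(\tfrac{d-k}{k}\bigr)^i$. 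Setting $\mu=\sqrt{(d-k)/k}$, I define $f_N(v)=\mu^{-\ell(v)}$ when $\ell(v)\le N$ and $f_N(v)=0$ otherwise.

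The calibration of $\mu$ makes each level contribute the same amount $k$ to $\|f_N\|^2$, so $\|f_N\|^2=k(N+1)$. A short computation based on the identity $(d-k)/\mu+k\mu=2\sqrt{k(d-k)}$ shows that at every \emph{interior} level $1\le\ell(v)\le N-1$ one has $(Af_N)(v)=2\sqrt{k(d-k)}\,f_N(v)$, while the three boundary levels $0$, $N$, $N+1$ each contribute only $k^2(d-k)$ to $\|Af_N\|^2$. Adding up yields $\|Af_N\|^2=(4N-1)k^2(d-k)$, so $\|Af_N\|^2/\|f_N\|^2\to 4k(d-k)$, and since the adjacency operator is bounded and self-adjoint this gives $\rho(H^{k,d})\ge 2\sqrt{k(d-k)}$, matching the upper bound. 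The crux is really just noticing that $\mu=\sqrt{(d-k)/k}$ plays a double role: it is both the square root of the branching rate of the block-tree (spreading the $\ell^2$ mass uniformly across the $N+1$ levels so the $O(1)$ boundary corrections are swamped by the linear-in-$N$ interior mass) and the minimizer of $(d-k)/\mu+k\mu$ (which forces the interior Rayleigh quotient to attain the target value $2\sqrt{k(d-k)}$).
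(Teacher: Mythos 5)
Your proposal is correct and takes essentially the same approach as the paper: the upper bound is identical (Hayes' theorem applied to the $k$-degenerate finite graphs $H^{k,d}_i$ together with $\rho(H^{k,d})=\sup_i\rho(H^{k,d}_i)$), and the lower bound is the same radial test-function argument on the levels $S_i$, the only difference being that the paper keeps the infinite-support function $f(v)=q^{\ell(v)}$ with subcritical $q<\sqrt{k/(d-k)}$ and reads off the Rayleigh quotient $\langle f\,|\,Af\rangle/\|f\|^2=2q(d-k)$, while you truncate the critical-rate function and estimate $\|Af_N\|/\|f_N\|$. The one step to tighten is the final inference: boundedness and self-adjointness only give $\|A\|=\sup_{\lambda\in\sigma(A)}|\lambda|\ge 2\sqrt{k(d-k)}$, whereas $\rho$ is defined as the supremum of the spectrum, so you should also use the nonnegativity of $A$ (or the bipartiteness of $H^{k,d}$, whose spectrum is symmetric) to conclude $\sup\sigma(A)=\|A\|$---or, more simply, compute $\langle f_N\,|\,Af_N\rangle/\|f_N\|^2=\tfrac{2N}{N+1}\sqrt{k(d-k)}$ directly, which is essentially the paper's calculation.
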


\begin{proof}
Due to Theorem~\ref{thm-hayes} and the second observation in the previous paragraph,
it suffices to show that $\rho(H^{k,d})\ge 2\sqrt{k(d-k)}$.
Let $A$ be the adjacency operator associated with $H=H^{k,d}$.
In addition to the sets $S_i$ defined during the construction of $H$, let $S_0$ be the
set of $k$ vertices of $H$ of degree $d-k$.  Let us recall that $|S_i|=k\left(\frac{d-k}{k}\right)^i$.
Furthermore, all the edges of $H$ are between the vertices of $S_i$ and $S_{i+1}$, 
for $i=0,1,2,\dots$.
Observe that there are exactly $k|S_{i+1}|=(d-k)|S_i|$ edges between $S_i$ and $S_{i+1}$.

Let $f$ be the function defined by $f(v)=q^i$ for any $v\in S_i$, where $0<q<\sqrt{\frac{k}{d-k}}$.
Note that 
$$
  ||f||^2=\sum_{v\in V(H)}f^2(v)
         =\sum_{i=0}^{\infty} |S_i|q^{2i}
         =\sum_{i=0}^{\infty} k\left(q^2\, \frac{d-k}{k}\right)^i<\infty.
$$
Also, 
\begin{eqnarray*}
  \langle f|Af\rangle &=& 2\sum_{uv\in E(H)} f(u)f(v)\\
      &=& 2\sum_{i=0}^{\infty} (d-k)|S_i|q^{2i+1}\\
      &=& 2q(d-k)\sum_{i=0}^{\infty}|S_i|q^{2i}.
\end{eqnarray*}
It follows from the above calculations that 
$\frac{\langle f|Af\rangle}{||f||^2}=2q(d-k)$ can be arbitrarily close to 
$2(d-k)\sqrt{\frac{k}{d-k}}=2\sqrt{k(d-k)}$.
Therefore, $\rho(H^{k,d})\ge 2\sqrt{k(d-k)}$.
\end{proof}

We conclude that

\begin{itemize}
\item the upper bound in Theorem~\ref{thm-hayes} is best possible for graphs that have an orientation with maximum indegree $k$
(i.e., the graphs with maximum average density at most $k$) and for $k$-degenerate graphs, and
\item as the graph $H^{2,d}$ is planar, the bound $\sqrt{8\Delta}+O(1)$ for the spectral radius of a planar graph as given in Theorem \ref{thm-radius}
is best possible up to the additive term.
\end{itemize}

\section{Hyperbolic tessellations}

In this section, we show how to apply a refined decomposition technique to bound the spectral radius of a special kind of infinite planar graphs.  
For two integers $p,q\ge 3$, where $\tfrac{1}{p}+\tfrac{1}{q} \le \tfrac{1}{2}$,
we call a connected infinite simple plane graph $G$ 
a {\em $(p,\ge q)$-tessellation} if it is $p$-regular and each of its faces has size at least $q$, and
every compact subset of the plane contains only a finite number of its vertices.
If all faces have finite size, then this condition implies that $G$ is 
a one-ended graph, but in the presence of faces of infinite length, $G$ may have
more than one end. We will assume that $p\ge4$ and $q\ge4$.
The cases $p=3$ and $q=3$ could be dealt with (assuming that $\frac{1}{p}+\frac{1}{q}\le \frac{1}{2}$), but the decomposition results
would require slight modificiations.

\begin{lemma}\label{lemma-etes}
Let\/ $C$ be a cycle in a $(p,\ge q)$-tessellation $G$, and let\/ $H$ be 
the subgraph of\/ $G$ contained in the closed disk bounded by $C$.
Let\/ $d=\sum_{u\in V(C)} \deg_H(u)$ and $k=|V(C)|$. 
Then $d < 2(k-1)\frac{q-1}{q-2}$.
\end{lemma}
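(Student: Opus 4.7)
The plan is to apply Euler's formula to $H$, viewed as a connected plane graph with outer boundary $C$, and combine it with the degree and face-size constraints inherited from $G$.

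First I would check the topological setup. Any $v \in V(H) \setminus V(C)$ has all of its $p$ $G$-neighbors inside the closed disk, since an edge from $v$ to a vertex outside would have to cross the simple cycle $C$ in the planar embedding. Hence $\deg_H(v) = p$ for every interior vertex. The same crossing argument shows that any path in $G$ from an interior vertex to $C$ must lie entirely inside the disk, so the connectedness of $G$ implies that $H$ is connected. Writing $n = |V(H)|$, $m = |E(H)|$, and $n_I = n - k$, the degree sum then reads $d + p n_I = 2m$.

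Next I would combine Euler's formula $n - m + f = 2$ with the face-size constraints. The outer face of $H$ is bounded by the simple cycle $C$ and so has size exactly $k$, while every interior face of $H$ is a face of $G$ lying in the disk and therefore has size at least $q$. Double counting edge--face incidences gives $2m \ge q(f-1) + k$, and substituting into Euler's formula yields
\[ (q-2)m \le q(n-1) - k. \]
I would then plug in $m = (d + p n_I)/2$ and regroup: the coefficient of $n_I$ becomes $2q - p(q-2) = -(pq - 2p - 2q)$, which is nonpositive by the hyperbolic condition $\tfrac{1}{p} + \tfrac{1}{q} \le \tfrac{1}{2}$. Discarding the nonpositive $n_I$-contribution leaves
\[ (q-2)d \;\le\; 2(q-1)k - 2q \;=\; 2(q-1)(k-1) - 2, \]
and dividing by $q-2$ gives $d \le 2(k-1)\tfrac{q-1}{q-2} - \tfrac{2}{q-2}$, which is strictly less than $2(k-1)\tfrac{q-1}{q-2}$ since $\tfrac{2}{q-2} > 0$.

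The main delicate point I expect is the topological setup: checking that $H$ is connected, that interior vertices retain their full $G$-degree, and that the outer face is traversed only once (so its size is exactly $k$ rather than larger through repeated edges on the boundary walk). Once these points are in place, the rest is just Euler's formula together with a single use of the hyperbolic inequality.
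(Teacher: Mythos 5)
Your proof is correct and follows essentially the same route as the paper: Euler's formula for $H$, the degree count $2|E(H)|=p(n-k)+d$, the face-size count with the outer face of size $k$, and the hyperbolic condition $\tfrac1p+\tfrac1q\le\tfrac12$ to discard a term, arriving at the same bound $d\le 2(k-1)\tfrac{q-1}{q-2}-\tfrac{2}{q-2}$. The only difference is presentational (the paper disposes of the interior-vertex contribution via $2e\ge d$ rather than dropping the $n_I$ term, and leaves the connectivity and outer-face checks implicit, which you verify explicitly).
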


\begin{proof}
By the definition of $(p,\ge q)$-tessellations, $H$ is finite.
Let $n=|V(H)|$, $e=|E(H)|$ and let $s$ be the number of faces of $H$.  By Euler's formula, $n+s=e+2$.  Furthermore, observe
that $2e=pn-pk+d$ and $2e\ge qs-q+k$.  Combining them, we obtain the following inequality:
$$\frac{d}{p}+2e\left(\frac{1}{2}-\frac{1}{p}-\frac{1}{q}\right)\le k-\frac{k}{q}-1.$$
As $\frac{1}{2}-\frac{1}{p}-\frac{1}{q}\ge 0$ and $2e\ge d$, we get
$$d\,\frac{q-2}{2q}\le k-\frac{k}{q}-1,$$
and hence
$$d\le 2k\frac{q-1}{q-2}-\frac{2q}{q-2} < (2k-2)\frac{q-1}{q-2}.$$
\end{proof}

Note that Lemma~\ref{lemma-etes} implies that $G$ is triangle-free if $q\ge4$.
Let $v$ be a vertex of a $(p,\ge q)$-tessellation $G$.  We define a partition $V_0\cup V_1\cup V_2\cup \cdots$ of vertices of $G$
and a partition $F_0\cup F_1\cup F_2\cup \cdots$ of faces of $G$ in the following way:  let $V_0=\{v\}$ and let $F_0$ consist of
faces incident with $v$.  For each $i>0$, let $V_i$ consist of the vertices incident with the faces in $F_{i-1}$, excluding
those in $V_{i-1}$, and let $F_i$ consist of all faces incident with the vertices of $V_i$, excluding those in $F_{i-1}$.
Let $G_i$ be the subgraph of $G$ induced by $V_i$.  We call the graphs $G_1$, $G_2$, \ldots the \DEF{layers} of $G$ with respect to $v$.

\begin{lemma}\label{lemma-part}
For every $(p,\ge q)$-tessellation $G$ with $p\ge4$ and $q\ge4$ and a vertex 
$v\in V(G)$, the partition $V_0\cup V_1\cup V_2\cup \cdots$ has the following
properties, for each $i>0:$

\begin{itemize}
\item[\rm (a)] The subgraph $G_i$ is either a union of infinite paths, or a cycle.  The face of $G_i$ that contains $v$
is equal to $F_0\cup F_1\cup\cdots\cup F_{i-1}$; the boundary of every other face of $G_i$ is bounded by a connected
component of $G_i$.
\item[\rm (b)] Each vertex of\/ $V_i$ has at most one neighbor in $V_{i-1}$.
\item[\rm (c)] A face belonging to $F_{i-1}$ is incident with at most two vertices in\/ $V_{i-1}$, and if it is incident with two such vertices, 
then they are adjacent in $G_{i-1}$.
\end{itemize}
\end{lemma}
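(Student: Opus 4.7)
The plan is to prove (a), (b), and (c) simultaneously by induction on $i$, using Lemma~\ref{lemma-etes} as the main tool whenever a short forbidden substructure appears.

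For the base case $i=1$, properties (b) and (c) are vacuous because $V_0=\{v\}$. For (a), I would concatenate, in the cyclic order around $v$, the boundary path of each face $f\in F_0$ between its two edges at $v$; this realizes $G_1$ as a closed walk (or disjoint union of infinite walks, if $v$ is incident with some infinite-length face). To show this walk is a simple cycle (resp.\ a union of infinite paths), any vertex repetition would produce a short cycle $C\subseteq G_1$ bounding a topological subdisk inside the closed star of $v$; every vertex of $C$ has full degree $p$ in $G$, so $\sum_{u\in V(C)}\deg_H(u)$ is close to $p|V(C)|$, contradicting Lemma~\ref{lemma-etes} since $\tfrac1p+\tfrac1q\le\tfrac12$ rearranges to $p(q-2)\ge 2q>2(q-1)$.

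For the inductive step, assume (a)--(c) hold at all levels $j<i$. By (a) at level $i-1$, $G_{i-1}$ is a cycle or union of infinite paths bounding the inner region $R_{i-1}:=F_0\cup\cdots\cup F_{i-2}$, with $v$ on the interior side. I would first establish (c) at level $i$: if some face $f\in F_{i-1}$ met $V_{i-1}$ in three vertices, or in two non-adjacent ones, then closing an appropriate arc of $\partial f$ with a subpath of $G_{i-1}$ produces a cycle $C$ bounding a subdisk on the $v$-side, and the same Lemma~\ref{lemma-etes} contradiction as in the base case applies. Next I would derive (b): if $u\in V_i$ had two $V_{i-1}$-neighbors $x,y$, then triangle-freeness (which follows from $q\ge 4$ via Lemma~\ref{lemma-etes}) rules out $xy\in E(G)$, and the $F_{i-1}$-face on the inward side of the path $xuy$ would contain the two non-adjacent $V_{i-1}$-vertices $x,y$, contradicting (c).

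For (a) at level $i$, I would analyze the cyclic edge--face arrangement at each $u\in V_i$ using (b) and (c) at level $i$. The inward ($F_{i-1}$) faces at $u$ form a contiguous arc in the rotation around $u$ (a consequence of the layered construction), bounded on either side by an edge whose second endpoint must lie in $(V_{i-1}\cup V_i)\cap(V_i\cup V_{i+1})=V_i$; the interior edges of the arc go to $V_{i-1}\cup V_i$, with at most one going to $V_{i-1}$ by (b), and (c) prevents a second $V_i$-edge from lying strictly inside the arc. This yields $\deg_{G_i}(u)\le 2$, so $G_i$ has maximum degree at most $2$; reusing the base-case short-cycle argument rules out finite-path components, leaving infinite paths or a single cycle, and the identification of the $v$-face of $G_i$ with $R_{i-1}\cup F_{i-1}$ is immediate from the construction.

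The main obstacle I anticipate is the topological bookkeeping of the Lemma~\ref{lemma-etes} contradictions. In each case one must carefully locate the enclosed subdisk so that its boundary cycle $C$ has each vertex contributing (essentially) its full degree $p$ to $\deg_H$, and the strict inequality of Lemma~\ref{lemma-etes} must be checked against the arithmetic consequence $(p-2)(q-2)\ge 4$ of the hypothesis $\tfrac1p+\tfrac1q\le\tfrac12$. Choosing a sufficiently innermost short cycle and handling vertices of $C$ whose edges may escape the enclosed disk is the delicate part.
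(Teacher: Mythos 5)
Your overall strategy---a layer-by-layer induction in which every forbidden configuration is killed by the Euler-formula bound of Lemma~\ref{lemma-etes}---is the same as the paper's, but two of your intermediate steps do not go through as written. The main gap is your derivation of (b) from (c). If $u\in V_i$ has two neighbours $x,y\in V_{i-1}$, there need not be a single ``$F_{i-1}$-face on the inward side of the path $xuy$'': the vertex $u$ may have further edges lying in the region bounded by $xuy$ and the arc of $G_{i-1}$, and then the $F_{i-1}$-face at the edge $ux$ is incident with $x$ but not with $y$, so (c) is not violated. Passing to $V_{i-1}$-neighbours of $u$ that are consecutive in the rotation at $u$ does not repair this, because the intervening edges go to vertices of $V_i$ inside the pocket and (c) says nothing about faces meeting only one $V_{i-1}$-vertex; any honest completion of this step seems to end up re-doing the counting argument anyway. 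The paper therefore proves (b) directly: it applies Lemma~\ref{lemma-etes} to the unique cycle $C$ in $G_{i-1}+ux+uy$ whose bounded disk avoids $v$, and uses (a) and (b) at level $i-1$ to get $\deg_H(w)\ge p-1\ge 3$ for every $w\in V(C)$ other than $u,x,y$, whence $d\ge 3(k-1)$, contradicting $d<2(k-1)\tfrac{q-1}{q-2}\le 3(k-1)$.

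A second problem is the orientation of the disk in your argument for (c): you close an arc of $\partial f$ with a subpath of $G_{i-1}$ and take ``a subdisk on the $v$-side''. On that side the $V_{i-1}$-vertices of the $G_{i-1}$-arc contribute only their two cycle edges plus at most one edge to $V_{i-2}$ (and such an edge need not even exist), so the degree sum can fall well below $3(k-1)$ and Lemma~\ref{lemma-etes} gives no contradiction. The paper chooses the disk containing neither $v$ nor $f$; only there does every $V_{i-1}$-vertex of $C$ off $\partial f$ send all of its $p-3$ edges to $V_i$ into the enclosed region, which is what forces $d\ge 3(k-1)$ with at most two exceptional vertices. Finally, in your proof of (a) the assertions that the $F_{i-1}$-faces at $u\in V_i$ form a contiguous arc and that finite path components cannot occur (i.e.\ that $G_i$ is exactly $2$-regular, not merely of maximum degree $2$) are attributed to ``the layered construction'', but they are not automatic: the paper obtains them from a further application of the same Lemma~\ref{lemma-etes} argument, showing that each $u\in V_i$ is incident with at most two faces of $F_{i-1}$ which, if there are two, share an edge $uw$ with $w\in V_{i-1}$, and that every edge of $G_i$ borders a face of $F_{i-1}$. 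Without these facts your degree count for $G_i$ is not established.
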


\begin{proof}
For a contradiction, assume that $i$ is the smallest positive integer such that one of the conditions (a), (b) or (c) is violated.
Let us first consider the possibility that condition (b) is false, and let $u\in V_i$ be a vertex with at least two neighbors $w_1,w_2\in V_{i-1}$.
Obviously, $i\ge 2$, and thus $G_{i-1}$ satisfies condition (a).  It follows that $w_1$ and $w_2$ belong to the same component of
$G_{i-1}$.  Let $C$ be the unique cycle in $G_{i-1}+uw_1+uw_2$ such that the disk bounded by $C$ does not contain $v$, and let $H$
be the subgraph of $G$ drawn in the closed disk bounded by $C$.  By the conditions (a) and (b) applied for $i-1$, we conclude that $\deg_H(w)\ge p-1\ge 3$
for each vertex $w\in V(C)$, except for $u$, $w_1$ and $w_2$. 
Let $k=|V(C)|$ and $d=\sum_{w\in V(C)} \deg_H(w)$. By the above,
$$
   d \ge 3(k-3) + \deg_H(u) + \deg_H(w_1) + \deg_H(w_2) \ge 3(k-1).
$$
However, since $q\ge4$, Lemma~\ref{lemma-etes} implies that
$$
   d < 2(k-1)\frac{q-1}{q-2} \le 3(k-1),
$$
a contradiction.

Now, consider the possibility that condition (b) holds, but condition (c) fails. 
As $G$ is triangle-free, 
we conclude that there exists a face $f\in F_{i-1}$ incident with two non-adjacent vertices $w_1$ and $w_2$ in $V_{i-1}$.
Note that $i\ge 2$.  We consider a cycle $C$ contained in the union of $G_{i-1}$ and the boundary of $f$, such that
the disk bounded by $C$ contains neither $v$ nor $f$, and let $H$ be the subgraph of $G$ contained in the closed disk bounded by $C$.
Note that $\deg_H(w)=p$ for any vertex $w\in V(C)\setminus V_{i-1}$, and $\deg_H(w)\ge p-1$ for $w\in V(C)\setminus V(f)$,
i.e., all but at most two vertices $w\in V(C)$ satisfy $\deg_H(w)\ge 3$.  This again contradicts Lemma~\ref{lemma-etes}.

Finally, suppose that (b) and (c) hold.  Consider a vertex $u\in V_i$. 
Similarly as in the case (b), we conclude that $u$ is incident with at most
two faces in $F_{i-1}$ and that if it is incident with two such faces,
then they share an edge $uw$ with $w\in V_{i-1}$.  Also, any edge of $G_i$ is incident with a face in $F_{i-1}$.
It follows that $G_i$ is $2$-regular, and thus it is a union of cycles and infinite paths.  By Lemma~\ref{lemma-etes}
and the property (b) of $G_i$, each disk bounded by a cycle in $G_i$ contains $v$. 
The claim (a) follows, as
$F_0\cup F_1\cup\cdots\cup F_{i-1}$ is a connected subset of the plane.
\end{proof}

We also need the following fractional version of Lemma~\ref{lem:1}:

\begin{lemma}
\label{lemma-fracadd}
Let\/ $G_1$, $G_2$, \ldots, $G_m$ be subgraphs of a graph $G$ such that each
edge of\/ $G$ appears in at least $p$ of the subgraphs. Then $\rho(G)\le \frac{1}{p}\sum_{i=1}^m\rho(G_i)$.
\end{lemma}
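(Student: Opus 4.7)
The plan is to exploit the variational characterization of the spectral radius of the adjacency operator. Since $A_G$ is a bounded self-adjoint operator on $\ell^2(V(G))$ with nonnegative entries, we have
$$\rho(G) = \sup_{\|f\|=1}\,|\langle f,A_Gf\rangle| = \sup_{f\ge 0,\,\|f\|=1}\langle f,A_Gf\rangle,$$
where the second equality follows by replacing any $f$ by $|f|$, which never decreases $|\langle f,A_Gf\rangle|$ because $A_G$ has nonnegative matrix entries. The same identity holds for each subgraph $G_i$. If one wishes to avoid infinite-dimensional subtleties, one can first reduce to finite subgraphs via Theorem~\ref{thm:basic} and apply Perron--Frobenius directly.

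Next, I would compare the quadratic forms edge by edge. For any real $f$, $\langle f,A_Gf\rangle = 2\sum_{uv\in E(G)} f(u)f(v)$, and similarly for each $G_i$. Writing $c(uv)$ for the number of indices $i$ with $uv\in E(G_i)$, the hypothesis says $c(uv)\ge p$ for every $uv\in E(G)$. So for every nonnegative $f\in\ell^2(V(G))$ with $\|f\|=1$,
$$p\,\langle f,A_Gf\rangle = 2p\!\!\sum_{uv\in E(G)}\!\!f(u)f(v)\ \le\ 2\!\!\sum_{uv\in E(G)}\!\!c(uv)f(u)f(v) = \sum_{i=1}^m 2\!\!\sum_{uv\in E(G_i)}\!\!f(u)f(v).$$

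Finally, regarding $f$ restricted to $V(G_i)$ as an element of $\ell^2(V(G_i))$ of norm at most $1$, the $i$-th term on the right equals $\langle f|_{V(G_i)},A_{G_i}f|_{V(G_i)}\rangle$, which is at most $\rho(G_i)\,\|f|_{V(G_i)}\|^2\le \rho(G_i)$ by the Rayleigh characterization applied to $G_i$. Summing and dividing by $p$ gives $\langle f,A_Gf\rangle\le \tfrac{1}{p}\sum_{i=1}^m\rho(G_i)$, and taking the supremum over nonnegative $f$ of unit norm yields the claimed inequality.

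The main obstacle is really only bookkeeping: making sure the reduction from general $f$ to nonnegative $f$ is justified in the infinite setting, and that restricting $f$ to $V(G_i)$ respects the inner product and norm bound. Both issues are handled either by the nonnegative-entries argument above or, safely, by first truncating to finite subgraphs and invoking Theorem~\ref{thm:basic}.
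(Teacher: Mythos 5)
Your proof is correct and follows essentially the same route as the paper: both bound the quadratic form $\langle f,Af\rangle$ by $\frac{1}{p}\sum_i\langle f,A_if\rangle$ and then apply the variational characterization of the top of the spectrum to each $G_i$. The only (harmless) difference is that the paper first deletes edges, using monotonicity, so that each edge lies in exactly $p$ subgraphs and $A=\frac{1}{p}\sum_i A_i$ holds as an identity for arbitrary $f$, whereas you keep the inequality $c(uv)\ge p$ and compensate by restricting to nonnegative test functions.
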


\begin{proof}
By the monotonicity, we may assume that each edge of $G$ appears in exactly $p$ of the subgraphs.
Let $A$ be the adjacency operator of $G$ and $A_i$ the adjacency operator of $G_i$ for $1\le i\le m$,
and observe that $A=\frac{1}{p}\sum_{i=1}^mA_i$.

Let $\varepsilon>0$.  There exists a function $f$ such that $||f||=1$ and $\langle f|Af\rangle\ge \rho(G)-\varepsilon$.
By linearity, $\langle f|Af\rangle=\frac{1}{p}\sum_{i=1}^m\langle f|A_if\rangle\le \frac{1}{p}\sum_{i=1}^m\rho(G_i)$.
Therefore,
$\rho(G)-\varepsilon\le \frac{1}{p}\sum_{i=1}^m\rho(G_i)$.  Since this inequality holds for any $\varepsilon>0$,
the claim of the lemma follows.
\end{proof}

We are now ready to estimate the spectral radius of tessellations:

\begin{theorem}\label{thm-tessel}
If\/ $G$ is a $(p,\ge q)$-tessellation with $p\ge4$ and $q\ge4$, then 
$$\rho(G)\le 2\sqrt{p-1}+\frac{2}{q-3}\,.$$
\end{theorem}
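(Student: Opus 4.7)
The plan is to combine the layer decomposition from Lemma~\ref{lemma-part} with the fractional subadditivity of Lemma~\ref{lemma-fracadd}, exploiting the face structure to amortize the contribution of intra-layer edges across long boundary paths. First I would fix a vertex $v\in V(G)$ and invoke Lemma~\ref{lemma-part} to obtain the layers $V_0,V_1,\ldots$ and the intra-layer subgraphs $G_1,G_2,\ldots$. Call an edge of $G$ \emph{radial} if it joins $V_{i-1}$ to $V_i$ for some $i\ge 1$, and \emph{lateral} if it belongs to some $G_i$. By Lemma~\ref{lemma-part}(b), every vertex of $V_i$ ($i\ge 1$) has at most one neighbor in $V_{i-1}$, so the radial edges form a spanning forest $T$ of $G$ of maximum degree at most $p$. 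The standard bound on the spectral radius of trees gives $\rho(T)\le 2\sqrt{p-1}$.

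The lateral edges form $L=\bigcup_i G_i$, which has $\Delta(L)\le 2$ by Lemma~\ref{lemma-part}(a), so a direct application of Lemma~\ref{lem:1}(b) already yields the suboptimal bound $\rho(G)\le 2\sqrt{p-1}+2$. To sharpen the additive constant from $2$ to $\frac{2}{q-3}$, I would exploit Lemma~\ref{lemma-part}(c): each face $f\in F_j$ carries at most one lateral edge of $G_j$ on its boundary, while the remaining $\ge q-1$ edges of $\partial f$ consist of two radial edges joining $V_j$ to $V_{j+1}$ together with a path of at least $q-3$ lateral edges of $G_{j+1}$. Thus every layer-$j$ lateral edge is ``witnessed'' by a long path of $\ge q-3$ lateral edges one layer out.

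Using this witnessing, I would construct, for each finite truncation $G_{[N]}$ of $G$ to the first $N$ layers, a family $\{H_1,\ldots,H_m\}$ of subgraphs in which every edge of $G_{[N]}$ lies in at least $q-3$ of the $H_k$, and each $H_k$ is a forest of maximum degree $\le p$ (so $\rho(H_k)\le 2\sqrt{p-1}$) apart from a bounded correction coming from the truncation boundary. Averaging by Lemma~\ref{lemma-fracadd} then gives $\rho(G_{[N]})\le \frac{1}{q-3}\sum_k\rho(H_k)\le 2\sqrt{p-1}+\frac{2}{q-3}$, and Theorem~\ref{thm:basic} lifts the bound to the full infinite $G$ as $N\to\infty$. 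The hard part will be the explicit combinatorial construction of $\{H_k\}$: each layer-$j$ lateral edge must be covered exactly $q-3$ times by subgraphs that trade it for its witnessing layer-$(j+1)$ path, matched consistently across faces shared with neighboring cycles, while keeping every $H_k$ tree-like with maximum degree at most $p$ and controlling the truncation-boundary contributions so they vanish in the limit.
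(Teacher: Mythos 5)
Your high-level strategy---layer decomposition plus Lemma~\ref{lemma-fracadd}, with the lateral edges amortized over long stretches one layer further out---is the same as the paper's, but your proposal stops exactly where the proof has to begin. You explicitly defer ``the explicit combinatorial construction of $\{H_k\}$'' as the hard part, and that construction is the entire content of the argument. The paper does it as follows: colour a vertex of $V_i$ black if it has a neighbour in $V_{i-1}$ and white otherwise; the maximal lateral paths whose interior vertices are white (``earthworms'') have length at least $q-3$ by Lemma~\ref{lemma-part}(a) and (c); choose edge-disjoint matchings $M_1,\dots,M_{q-3}$, each meeting every earthworm in exactly one edge, and set $T_i=G-M_i$. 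The key structural claim---for which your sketch offers no substitute---is that each $T_i$ is a forest: any cycle of $G$ must traverse, inside its outermost layer $G_j$, a path joining two black vertices, such a path contains a whole earthworm, and hence an edge of $M_i$. Your ``witnessing'' relation, which pairs a layer-$j$ lateral edge with a layer-$(j+1)$ path, does not by itself produce subgraphs that are forests, and it is not clear how the consistent ``trading'' you describe would be carried out; without this step there is no proof.

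There is also an accounting problem with the family you describe. If every $H_k$ is a forest of maximum degree at most $p$ and every edge lies in at least $q-3$ of them, Lemma~\ref{lemma-fracadd} gives $\rho(G)\le \frac{m}{q-3}\,2\sqrt{p-1}$; with $m=q-3$ this would force each $H_k$ to contain every edge of $G$ (impossible unless $G$ is itself a forest), and with $m=q-2$ it gives $2\sqrt{p-1}+\frac{2\sqrt{p-1}}{q-3}$, which is weaker than the claimed bound. To obtain the additive term $\frac{2}{q-3}$, one member of the covering family must not be a degree-$p$ forest but the union $G_1\cup G_2\cup\cdots$ of the layer graphs, which is a disjoint union of paths and cycles and hence has spectral radius at most $2$; this is the paper's $T_{q-2}$, and then radial edges lie in all $q-3$ forests while each lateral edge lies in $T_{q-2}$ and in at least $q-4$ of the forests (it is removed by at most one matching). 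Finally, the truncation $G_{[N]}$ and the attendant boundary corrections are unnecessary: the matchings and the forests can be taken in the infinite graph directly, since an infinite forest of maximum degree at most $p$ has spectral radius at most $2\sqrt{p-1}$ and Lemma~\ref{lemma-fracadd} applies to the adjacency operators themselves.
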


\begin{proof}
Choose a vertex $v\in V(G)$ arbitrarily, and consider the layers 
$G_1$, $G_2$, \ldots with respect to $v$.
Let us color a vertex $u\in V_i$ black if $u$ has a neighbor in $V_{i-1}$, and white otherwise.
Let an {\em earthworm} be a maximal subgraph $H$ of $G_1\cup G_2\cup\cdots$ such that
every two vertices of $H$ are joined by a path whose inner vertices are white.
By Lemma~\ref{lemma-part}(a) and (c), all earthworms are paths of length at least $q-3$.
Let $M_1$, $M_2$, \ldots, $M_{q-3}$ be edge-disjoint matchings such that each of them
intersects every earthworm in exactly one edge. 
For $1\le i\le q-3$, consider the graph $T_i=G-M_i$. We claim that 
$T_i$ is a forest. Suppose for a contradiction that $T_i$ contains a cycle $C$.
Let $j$ be the greatest index such that $V(C)\cap V_j\neq\emptyset$.  As $M_i$ contains at least one edge
from each component of $G_j$, $C\not\subseteq G_j$.  Let $P$ be a maximal subpath of $C\cap G_j$.
Since each vertex of $G_j$ has at most one neighbor in $V_{j-1}$, $P$ is not a single vertex.
We conclude that $P$ joins two black vertices of $G_j$ and thus it is a supergraph of at least one earthworm.
Therefore, $M_i\cap P\neq \emptyset$, which is a contradiction.
This proves our claim.

Let $T_{q-2}=G_1\cup G_2\cup\cdots$. Observe that each edge of $G$ belongs to at least
$q-3$ of the graphs $T_1$, $T_2$, \ldots, $T_{q-2}$, $\rho(T_i)\le 2\sqrt{p-1}$ for $1\le i\le q-3$ and $\rho(T_{q-2})=2$.
By Lemma~\ref{lemma-fracadd}, we get $\rho(G)\le 2\sqrt{p-1}+\frac{2}{q-3}$.
\end{proof}

As $q$ goes to infinity, the bound of Theorem~\ref{thm-tessel} aproaches 
$2\sqrt{p-1}$, which is the spectral radius of the $p$-regular infinite tree.
This considerably improves known upper bounds, including the previously best
bound of Higuchi and Shirai \cite{HS}, who proved that
$$
  \rho(G)\le 2\sqrt{(p-2)(1+\tfrac{1}{q-2})}.
$$ 

A non-trivial lower bound on the spectral radius of $p$-regular graphs has
been obtained only for vertex-transitive graphs. Paschke~\cite{Pa}
showed that a vertex transitive $p$-regular graph containing a $q$-cycle has 
spectral radius at least
$$
   \min_{s>0} \,(p-2)\, \phi\biggl(\frac{1+\cosh sq}{\sinh sq\sinh s}\biggr) 
              + 2\cosh s,
$$
where $\phi(t) = \frac{\sqrt{1+t^2} - 1}{t}$. This gives a lower bound of the form
$$
   2\sqrt{p-1} + \frac{2(p-2)}{(p-1)^{(q+1)/2}}\, h(p,q),
$$
where $h$ is a function such that such that $\lim_{p\to\infty} h(p,q)=1$ and $\lim_{q\to\infty} h(p,q)=1$.
The asymptotics (when $p$ or $q$ is large) of this lower bound is different
from our upper bound in Theorem~\ref{thm-tessel} in the ``second order term'' when
$(p,\ge q)$-tessellations are considered. It would be of interest to determine 
the exact behavior.

\end{document}